\newcommand{\X}{\mathfrak{X}}
\newcommand{\Rc}{\mathfrak{R}}
\def\onehalf{{\textstyle\frac12}}
\def\lie#1{{\cal L}_{#1}}
\def\cycsum#1{\sum_{\stackrel{cyclic}{#1}}}
\def\cyclsum#1#2{\mathop{\sum_{\stackrel{cyclic}{#1}}}_{#2}}
\def\R{{\mathbb R}}
\def\hook{{\mathchoice{\vrule height 0pt depth 0.4pt width 3pt
\vrule height 5pt depth 0.4pt \kern 3pt} {\vrule height 0pt depth
0.4pt width 3pt \vrule height 5pt depth 0.4pt \kern 3pt} {\vrule
height 0pt depth 0.2pt width 1.5pt \vrule height 3pt depth 0.2pt
width 0.2pt \kern 1pt} {\vrule height 0pt depth 0.2pt width 1.5pt
\vrule height 3pt depth 0.2pt width 0.2pt \kern 1pt} }}
\theoremstyle{plain}
\newtheorem{thm}{Theorem}[section]
\newtheorem{lem}[thm]{Lemma}
\newtheorem{propn}[thm]{Proposition}
\newtheorem{cor}[thm]{Corollary}
\theoremstyle{definition}
\newtheorem{defn}[thm]{Definition}
\theoremstyle{remark}
\newtheorem*{rmk}{Remarks}
\newcommand{\pd}[2]{\frac{\partial#1}{\partial#2}}
\begin{document}

\title{ An intrinsic and exterior form of the Bianchi identities}


\author{Thoan Do and Geoff Prince
\thanks{Email: {\tt dtkthoan\char64 ctu.edu.vn,
geoff\char64 amsi.org.au}}
\\Department of Mathematics and Statistics, La Trobe University,\\ Victoria 3086,
Australia\\ The Australian Mathematical Sciences Institute,\\ c/o The University of Melbourne,\\Victoria 3010, Australia}

\maketitle

\abstract{We give an elegant formulation of the structure equations (of Cartan) and the Bianchi identities in terms of exterior calculus without reference to a particular basis and without the exterior covariant derivative. This approach allows both structure equations and the Bianchi identities to be expressed in terms of forms of arbitrary degree. We demonstrate the relationship with both the conventional vector version of the Bianchi identities and to the exterior covariant derivative approach. Contact manifolds, codimension one foliations and the Cartan form of classical mechanics are studied as examples of its flexibility and utility.}

\section{Introduction}\label{Intro}

The Bianchi identities are well-known in the literature and continue to be heavily used in mathematical physics.  They are realised in many ways, a number of which we find inelegant and obscure. It is our intention here to develop an index-free presentation of the structure equations and the Bianchi identities which relies solely on covariant and exterior derivatives and avoids matrix-valued forms and the exterior covariant derivative found in the classic texts of Kobayashi and Nomizu \cite{KN63} and Spivak \cite{Sp79}. Not only is the result more intrinsic and flexible than the conventional presentation, it is particularly amenable in situations where there is a distinguished set of one forms, for example, in the case of a contact manifold, a codimension one foliation or a distinguished co-frame. As a further extension we give a $p$-form version of the structure equations and the Bianchi identities which we apply in the context of the inverse problem in the calculus of variations.

An important aspect of our approach is that it allows the covariant derivative, torsion and curvature of the connection to be explicitly embedded in the expression of the exterior derivative of any $p$ form. We believe this to be a new and powerful result.

We will be dealing only with the Bianchi identities arising from a linear connection on (the tangent bundle of) a smooth manifold. We won't be dealing with connections on more general vector bundles. We will refer to the books of Crampin and Pirani \cite{cramp86} and Renteln~\cite{R14}.

For a given linear connection $\nabla$ on a smooth, connected manifold $M^n$, the torsion $T$ and the curvature $\Rc$ of the connection are defined as following:
\begin{align}
T(X,Y):=&\nabla_XY-\nabla_YX-[X,Y], \label{Torsion} \\
\Rc(X,Y)Z:=&\nabla_X\nabla_YZ-\nabla_Y\nabla_XZ-\nabla_{[X,Y]}Z, \label{Curvature}
\end{align}
for all $X,Y,Z \in \X(M).$

By taking covariant derivatives of $T$ and of $\Rc$, we obtain {\em the first Bianchi identity} and {\em the second Bianchi identity} respectively. These give a number of useful relations between the various operators and their derivatives, as given in Crampin and Pirani \cite{cramp86}. They are:
\begin{subequations}
\begin{align}
\cycsum{XYZ} \Rc(X,Y)Z&=\cycsum{XYZ} \nabla_X T(Y,Z)+\cycsum{XYZ}  T(T(X,Y),Z),\label{first-bianchi-identity}\\
\cycsum{XYZ} \nabla_X \Rc(Y,Z)&=\cycsum{XYZ} \Rc(X,T(Y,Z))\label{second-bianchi-identity}
\end{align}
\end{subequations}
where $\displaystyle{\cycsum{XYZ}}$ indicates a cyclic sum over the arguments $X,Y,Z.$\\

In this paper we will show that the definitions of curvature and torsion can be rephrased, for arbitrary 1-forms $\theta$ and vector fields $Z$, as
\begin{align*}
T_\theta &=d\theta-\nabla\theta\wedge I\\
\Rc_{\theta,Z} &=d\omega_{\theta,Z}-\nabla\theta\wedge\nabla Z,
\end{align*}
and that the Bianchi identities become
\begin{align*}
dT_\theta&=\Rc_\theta+\nabla\theta\wedge T,\\
d\Rc_{\theta,Z}&=\nabla\theta\wedge\Rc_Z+\Rc_\theta\wedge\nabla Z.
\end{align*}

The Bianchi identities are conventionally represented in terms of forms by taking exterior derivatives of {\em Cartan's first and second structure equations} which are respectively  (see \cite{cramp86}),
\begin{align}
& d\theta^a+\omega^a_b\wedge\theta^b=\Theta^a, \label{first-structure-equation}\\
& d\omega^a_b+\omega^a_c\wedge\omega^c_b=\Omega^a_b,\label{second-structure-equation}
\end{align}
where $\{\theta^a\}$ is a local basis of 1-forms for $\bigwedge^1(M)$ dual to a local basis of vector fields $\{U_a\}$ on $M$. The {\em connection forms} $\omega^a_b$ are given by
$$\omega^a_b(V):=\theta^a(\nabla_V U_b),$$
for an arbitrary vector field V. The {\em torsion 2-forms} $\Theta^a$ and the {\em curvature 2-forms} $\Omega^a_b$ are defined by
$$\Theta^a(X,Y):=\theta^a(T(X,Y)),$$
$$\Omega^a_b(X,Y):=\theta^a(\Rc(X,Y)U_b).$$

\begin{rmk}
\item[1.] We emphasise that the definitions \eqref{Torsion} and \eqref{Curvature} can be recovered from the structure equations \eqref{first-structure-equation} and \eqref{second-structure-equation} from which they are themselves derived.

\item[2.] Renteln~\cite{R14} reports the use of an abbreviated notation for \eqref{first-structure-equation}:
$$\Theta=d\theta + \omega\wedge\theta,$$
but wisely eschews its use. We will shortly present an accurate version.

\end{rmk}
Taking exterior derivatives of the first and second structure equations \eqref{first-structure-equation},\eqref{second-structure-equation} we have (again see \cite{cramp86}):
\begin{subequations} \label{Bianchi-id-form}
\begin{align}
d\Theta^a+\omega^a_b\wedge\Theta^b=\Omega^a_b\wedge\theta^b,\label{first-Bianchi-form}\\
d\Omega^a_b+\omega^a_c\wedge\Omega^c_b=\Omega^a_c\wedge\omega^c_b.\label{second-Bianchi-form}
\end{align}
\end{subequations}
These are the Cartan versions of the first and the second Bianchi identities respectively.

There is yet another way to think about the Bianchi identities using the exterior covariant derivative, $d^\nabla$. This is defined on a tensor-valued $k$-form $A$ acting on $M$ as follows (see for example \cite{KMS93,SLK14}):
$$d^\nabla A:=\nabla A \quad \text{if} \ k=0$$
where $\nabla A$ is the tensor-valued 1-form $\nabla A(X):=\nabla_XA$, and, for $1\le k \le n = \text{dim}(M)$,
\begin{align}
&d^\nabla A(X_0,\dots, X_k):=\sum^k_{i=0}(-1)^i\nabla_{X_i}(A(X_0,\dots,\bar X_i,\dots,X_k))\notag\\
&\quad\quad +\sum_{0\le i\le j \le k}(-1)^{i+j}A([X_i,X_j],X_0,\dots,\bar X_i,\dots,\bar X_j,\dots,X_k)\label{ext_cov_deriv}
\end{align}
where a bar over an argument indicates that it is missing.

 Now we consider the torsion $T$ as a vector-valued two-form  acting on $\mathfrak{X}(M)$ and $\Rc$ as an endomorphism-valued two-form or as a one-form taking values in the vector-valued two forms,  that is,
$\Rc: (X,Y)\mapsto \Rc (X,Y)$ or $\Rc: Z\mapsto \Rc_Z$ where $\Rc_Z(X,Y):=\Rc(X,Y)Z$. Then the structure equations are
\begin{subequations}
\begin{align}
d^\nabla I=T \label{Delanoe-SI}\\
d^\nabla\circ\nabla=\Rc, \label{Delanoe-SII}
\end{align}
\end{subequations}
\noindent where $\nabla$ is the {\em covariant differential} with $\nabla\theta(X):=\nabla_X\theta$ and $\nabla Z(X):=\nabla_X(Z).$

The Bianchi identities are (see \cite{Delanoe,SLK14})
\begin{subequations}
\begin{align}
d^\nabla T&= \Rc \wedge I \label{Delanoe-BI}\\
d^\nabla \Rc&=0. \label{Delanoe-BII}
\end{align}
\end{subequations}
Here $I$ is the identity endomorphism on $\mathfrak{X}(M)$, in other contexts also called the {\em canonical connection or soldering form}, so that $\Rc\wedge I(X,Y,Z)=\displaystyle\cycsum{XYZ}\Rc(X,Y)Z$. These versions of the Bianchi identities are derived by Delanoe~\cite{Delanoe} from the definitions \eqref{Torsion} and \eqref{Curvature} without use of the usual exterior derivative $d$ or the structure equations \eqref{Delanoe-SI},\eqref{Delanoe-SII}, although \eqref{Delanoe-SI} is known in the literature.

In the next sections, we will produce our new formulae for Bianchi identities and show the equivalence with those given in \eqref{first-bianchi-identity}, \eqref{second-bianchi-identity} and \eqref{Delanoe-BI}, \eqref{Delanoe-BII}. We will also demonstrate the utility of our approach by considering contact manifolds and codimension one foliations as examples.

\section{New versions of the structure equations and Bianchi identities}
We now formulate the structure equations and the Cartan version of the Bianchi identities in an intrinsic manner without reference to a particular basis using only $d$ and $\nabla$. To do this we effectively turn the torsion, as a vector-valued 2-form, and the curvature, as an endomorphism-valued 2-form, into conventional 2-forms. This allows the use of conventional exterior calculus.

\begin{defn}
For any $\theta \in \bigwedge^1(M)$ and $X,Y,Z \in \mathfrak{X}(M)$ the torsion and curvature 2-forms, $T_\theta$ and $\Rc_{\theta,Z}$, are defined as follows:
\begin{align}
T_\theta(X,Y):=\theta(T(X,Y))=T(X,Y)(\theta)\\
\Rc_{\theta,Z}(X,Y):=\theta(\Rc(X,Y)Z)=\Rc(X,Y)(\theta,Z).
\end{align}
\end{defn}
Note: $T_{\theta}$ and $\Rc_{\theta,Z}$ are function-linear in $\theta$ and $Z$. We also define the curvature 3-form $\Rc_\theta$ by
\begin{equation}
\Rc_\theta (X,Y,Z):=\cycsum{XYZ}\Rc_{\theta, Z}(X,Y).
\end{equation}
\begin{defn}
For $\theta \in \bigwedge^1(M)$ and $X,Y,Z \in \mathfrak{X}(M)$, the 2-forms $\Xi_\theta$, $\Psi_{\theta,Z}$ and the connection 1-forms $\omega_{\theta,Z}$ are defined as follows:
\begin{align}
&\Xi_\theta(X,Y):=\nabla_Y\theta(X)-\nabla_X\theta (Y)  &\text{or}& &\Xi_\theta&:=-\nabla\theta\wedge I,\\
&\Psi_{\theta,Z}(X,Y):=(\nabla_Y\theta)(\nabla_X Z)-(\nabla_X\theta)(\nabla_Y Z) &\text{or}& &\Psi_{\theta,Z}&:=-\nabla\theta\wedge\nabla Z,\\
&\omega_{\theta,Z}(X):=\theta(\nabla_X Z)  &\text{or}& &\omega_{\theta,Z}&:=\theta\circ\nabla Z.
\end{align}
\end{defn}

%

While the torsion and curvature two-forms and the connection one-forms are central to the development of the conventional structure equations and Bianchi identities, they can be generalised in a way that is useful in applications.

\begin{defn}\label{def-T-Theta-Xi-Theta}
For an arbitrary $p$-form $\Theta$ and $X_1,X_2,\dots, X_{p+1} \in \mathfrak{X}(M)$, the $(p+1)$-forms $\Xi_\Theta$, $T_\Theta$ are defined as follows:
\begin{align}
\label{def-Xi-Theta} \Xi_\Theta(X_1,\dots,X_{p+1})&:=\sum_{1\leq i\leq p+1} (-1)^{i}\nabla_{X_i}\Theta(X_1,\dots, \bar X_i, \dots, X_{p+1})\\
\label{def-T-Theta} T_{\Theta}(X_1,\dots, X_{p+1})&:=\sum_{1\leq i\leq j\leq p+1} (-1)^{i+j+1}\Theta(T(X_i,X_j), X_1,\dots, \bar X_i, \dots, \bar X_j, \dots, X_{p+1})
\end{align}
\end{defn}

\begin{thm}
For arbitrary $\Theta \in \bigwedge^p(M)$ we have
\begin{equation}\label{first-struc-eq-p-form}
T_\Theta=d\Theta+\Xi_\Theta.
\end{equation}
\end{thm}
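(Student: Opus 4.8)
The plan is to evaluate both sides on arbitrary vector fields $X_1,\dots,X_{p+1}$ and to compare term by term, using the Cartan formula for the exterior derivative,
\begin{align*}
d\Theta(X_1,\dots,X_{p+1})=&\sum_{i=1}^{p+1}(-1)^{i+1}X_i\big(\Theta(X_1,\dots,\bar X_i,\dots,X_{p+1})\big)\\
&+\sum_{1\le i<j\le p+1}(-1)^{i+j}\Theta([X_i,X_j],X_1,\dots,\bar X_i,\dots,\bar X_j,\dots,X_{p+1}),
\end{align*}
together with the Leibniz rule for the tensorial covariant derivative, $(\nabla_{X_i}\Theta)(Y_1,\dots,Y_p)=X_i\big(\Theta(Y_1,\dots,Y_p)\big)-\sum_k\Theta(Y_1,\dots,\nabla_{X_i}Y_k,\dots,Y_p)$.

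First I would expand $\Xi_\Theta$ by applying this Leibniz rule with $(Y_1,\dots,Y_p)=(X_1,\dots,\bar X_i,\dots,X_{p+1})$. The leading term $X_i\big(\Theta(\dots)\big)$ carries the sign $(-1)^i$ from the definition \eqref{def-Xi-Theta}, which is exactly opposite to the sign $(-1)^{i+1}$ of the first sum of $d\Theta$. Hence on forming $d\Theta+\Xi_\Theta$ every directional-derivative term cancels, leaving only the Lie-bracket sum of $d\Theta$ together with the terms $-\sum_i(-1)^i\sum_{k\ne i}\Theta(X_1,\dots,\nabla_{X_i}X_k,\dots,\bar X_i,\dots,X_{p+1})$ produced by the Leibniz expansion.

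Next I would attack $T_\Theta$ by substituting $T(X_i,X_j)=\nabla_{X_i}X_j-\nabla_{X_j}X_i-[X_i,X_j]$ into the definition \eqref{def-T-Theta}. The bracket contribution reproduces precisely the Lie-bracket sum already present in $d\Theta$, so these cancel, and the identity reduces to the purely algebraic claim
\begin{equation*}
-\sum_{i}(-1)^i\sum_{k\ne i}\Theta(X_1,\dots,\nabla_{X_i}X_k,\dots,\bar X_i,\dots,X_{p+1})=-\sum_{i<j}(-1)^{i+j}\Big[\Theta(\nabla_{X_i}X_j,\dots)-\Theta(\nabla_{X_j}X_i,\dots)\Big],
\end{equation*}
where on the right the differentiated field sits in the first slot and both $X_i,X_j$ are deleted from the remaining arguments.

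The main obstacle, and the only genuine computation, is the sign bookkeeping needed to match these covariant-derivative terms. On the left the vector $\nabla_{X_i}X_k$ sits in the slot originally occupied by $X_k$, whereas on the right it sits in the first slot, so moving it there costs a transposition sign. Splitting the inner sum into $k>i$ and $k<i$ and locating $\nabla_{X_i}X_k$ (at position $k-1$ when $k>i$, since $X_i$ is deleted ahead of it, and at position $k$ when $k<i$), I would check that pulling it to the first slot converts the coefficient $-(-1)^i$ into $-(-1)^{i+k}$ in the first case and $+(-1)^{i+k}$ in the second. These are exactly the coefficients of the $\nabla_{X_i}X_j$ and $\nabla_{X_j}X_i$ terms on the right, under the relabelling $(i,k)\mapsto(i,j)$ for $k>i$ and $(i,k)\mapsto(k,i)$ for $k<i$. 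This bijective matching of index pairs establishes the reduced identity and hence \eqref{first-struc-eq-p-form}.
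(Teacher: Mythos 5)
Your proof is correct and follows essentially the same route as the paper's: both expand via the intrinsic formula for $d\Theta$, apply the Leibniz rule to trade $X_i(\Theta(\dots))$ for $\nabla_{X_i}\Theta(\dots)$, substitute the torsion definition for the Lie brackets, and observe that the covariant-derivative correction terms cancel. The only difference is organizational --- the paper expands $d\Theta$ and regroups it as $T_\Theta - \Xi_\Theta$, whereas you assemble $d\Theta + \Xi_\Theta$ and match it against $T_\Theta$, making explicit the transposition sign bookkeeping that the paper leaves implicit.
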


\begin{proof}
Using the exterior derivative formula for $p$-forms (in which an over bar indicates a missing argument), we have
\begin{align*}
d\Theta(X_1,\dots, X_{p+1})=&\sum_{1\leq i\leq p+1} (-1)^{i+1} X_i(\Theta(X_1,\dots, \bar X_i, \dots, X_{p+1}))\\
                   +&\sum_{1\leq i\leq j\leq p+1} (-1)^{i+j}\Theta([X_i,X_j], X_1,\dots, \bar X_i, \dots, \bar X_j, \dots, X_{p+1})\\
                   =&\sum_{1\leq i\leq p+1} (-1)^{i+1} \nabla_{X_i}\Theta(X_1,\dots, \bar X_i, \dots, X_{p+1})\\
                   &+\sum_{1\leq i\leq j\leq p+1}(-1)^{i+j-1}\big(\Theta(\nabla_{X_i}X_j, X_1, \dots, \bar X_i, \dots, \bar X_j, \dots, X_{p+1})\\
                   &\quad -\Theta(\nabla_{X_j}X_i, X_1, \dots, \bar X_i, \dots, \bar X_j, \dots, X_{p+1})\big )\\
                   &+\sum_{1\leq i\leq j\leq p+1} (-1)^{i+j+1}\Theta(T(X_i,X_j), X_1,\dots, \bar X_i, \dots, \bar X_j, \dots, X_{p+1})\\
                   &+\sum_{1\leq i\leq j\leq p+1} (-1)^{i+j}\big(\Theta(\nabla_{X_i}X_j, X_1, \dots, \bar X_i, \dots, \bar X_j, \dots, X_{p+1})\\
                   &\quad -\Theta(\nabla_{X_j}X_i, X_1, \dots, \bar X_i, \dots, \bar X_j, \dots, X_{p+1})\big )
\end{align*}
Thus
\begin{align*}
d\Theta(X_1,\dots, X_{p+1})=&\sum_{1\leq i\leq p+1} (-1)^{i+1} \nabla_{X_i}\Theta(X_1,\dots, \bar X_i, \dots, X_{p+1})\\
&+\sum_{1\leq i\leq j\leq p+1} (-1)^{i+j+1}\Theta(T(X_i,X_j), X_1,\dots, \bar X_i, \dots, \bar X_j, \dots, X_{p+1})\\
=-&\Xi_\Theta(X_1,\dots, X_{p+1})+T_\Theta(X_1,\dots, X_{p+1})
\end{align*}
as required.
\end{proof}

\begin{cor}
For arbitrary $\theta \in \bigwedge^1(M)$ we have the
\begin{equation}\label{first-struc-eq-form}
\text{first structure equations:}\quad\quad T_\theta=d\theta+\Xi_\theta \quad \text{or}\quad T_\theta =d\theta-\nabla\theta\wedge I,
\end{equation}
where as defined in \eqref{def-Xi-Theta}, $\Xi_\theta(X,Y)=\nabla_Y \theta (X)-\nabla_X \theta(Y)=-\nabla\theta\wedge I (X,Y).$
\end{cor}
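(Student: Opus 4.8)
The plan is to obtain this corollary as the immediate specialisation of the preceding Theorem to the case $p=1$, supplemented by a short check that the general definitions \eqref{def-Xi-Theta} and \eqref{def-T-Theta} collapse to the two-form expressions introduced earlier. Setting $\Theta=\theta$ and $p+1=2$ in \eqref{first-struc-eq-p-form} yields $T_\theta=d\theta+\Xi_\theta$ directly, so the only substantive work is to identify the two sides with the objects named in the corollary and to verify the wedge rewriting.

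First I would expand $\Xi_\theta$ from \eqref{def-Xi-Theta} with two arguments. The sum over $i=1,2$ produces $(-1)^{1}\nabla_{X_1}\theta(X_2)+(-1)^{2}\nabla_{X_2}\theta(X_1)=\nabla_{X_2}\theta(X_1)-\nabla_{X_1}\theta(X_2)$, which is exactly $\nabla_Y\theta(X)-\nabla_X\theta(Y)$ with $(X,Y)=(X_1,X_2)$, matching the earlier definition of $\Xi_\theta$. Likewise, expanding $T_\theta$ from \eqref{def-T-Theta} with $p+1=2$ leaves only the term $i=1,\ j=2$, since the diagonal contribution vanishes as $T(X_i,X_i)=0$; this gives $(-1)^{4}\theta(T(X_1,X_2))=\theta(T(X,Y))$, the torsion two-form defined at the start of the section. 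Together these establish $T_\theta=d\theta+\Xi_\theta$ in the notation of the corollary.

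It then remains to confirm the compact form $\Xi_\theta=-\nabla\theta\wedge I$. Here I would read $\nabla\theta$ as the covector-valued one-form $X\mapsto\nabla_X\theta$ and $I$ as the identity, a vector-valued one-form $X\mapsto X$, and interpret the wedge as the antisymmetrised pairing of the covector value against the vector value:
\begin{equation*}
(\nabla\theta\wedge I)(X,Y)=(\nabla_X\theta)(I(Y))-(\nabla_Y\theta)(I(X))=(\nabla_X\theta)(Y)-(\nabla_Y\theta)(X).
\end{equation*}
Negating this reproduces $\nabla_Y\theta(X)-\nabla_X\theta(Y)=\Xi_\theta(X,Y)$, whence $\Xi_\theta=-\nabla\theta\wedge I$ and therefore $T_\theta=d\theta-\nabla\theta\wedge I$, as claimed.

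The computation is entirely routine; the only point requiring care is fixing the meaning of the symbol $\wedge$ between a covector-valued and a vector-valued form and tracking the sign it carries, since the corollary presents this wedge purely as a compact rewriting of $\Xi_\theta$. Once that convention is pinned down to the contracted antisymmetrisation above, every equality is forced, and no input beyond the Theorem is needed.
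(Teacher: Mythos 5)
Your proposal is correct and matches the paper's (implicit) argument: the corollary is presented there as the immediate $p=1$ specialisation of the preceding Theorem, which is exactly what you do. Your additional checks --- that the general sums \eqref{def-Xi-Theta}, \eqref{def-T-Theta} collapse to the two-form definitions (with the diagonal torsion terms vanishing) and that the sign convention in $\Xi_\theta=-\nabla\theta\wedge I$ agrees with the contracted antisymmetrised pairing --- are consistent with the paper's Definition of $\Xi_\theta$ and simply make explicit what the paper leaves unstated.
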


%
%
\begin{rmk} It will be useful in applications to rephrase the first structure equation \eqref{first-struc-eq-form} and its antecedent \eqref{first-struc-eq-p-form} as

$$d\theta= T_\theta + \nabla\theta\wedge I \ \quad\  \text{and}\ \quad \ d\Theta= T_\Theta - \Xi_\Theta,$$
giving the exterior derivative in terms of the torsion and the covariant derivative.
\end{rmk}

\begin{defn}\label{def-forms-second-structure-eq}
For $\Theta \in \bigwedge^p(M)$ and $X_1,X_2,\dots, X_{p+1}, Z$ $\in\mathfrak{X}(M)$, the $(p+1)$-forms $\Rc_{\Theta,Z}$, $\Psi_{\Theta,Z}$ and $T_{\Theta,Z}$ and $p$-forms $\omega_{\Theta,Z}$ are defined as follows:
\begin{align}
&\omega_{\Theta,Z}(X_1, \dots, X_p):=\sum_{1\leq i\leq p}(-1)^{i+1}\Theta(\nabla_{X_i}Z, X_1, \dots, \bar X_i,\dots, X_p)\\
&\Rc_{\Theta,Z}(X_1, \dots, X_{p+1}):=\sum_{1\leq i\leq j\leq p+1}(-1)^{i+j+1}\Theta(\Rc(X_i,X_j)Z, X_1,\dots, \bar X_i, \dots, \bar X_j, \dots X_{p+1})\\
\notag &\Psi_{\Theta,Z}(X_1, \dots, X_{p+1}):=\sum_{1\leq i\leq j\leq p+1}(-1)^{i+j}\big(\nabla_{X_i}\Theta(\nabla_{X_j}Z, X_1, \dots,\bar X_i, \dots, \bar X_j,\dots, X_{p+1})\\
 &\qquad \qquad \qquad \qquad  -\nabla_{X_j}\Theta(\nabla_{X_i}Z,X_1, \dots,\bar X_i, \dots, \bar X_j,\dots, X_{p+1})\big)\\
&T_{\Theta,Z}(X_1, \dots, X_{p+1}):=\cyclsum{X_iX_jX_k}{1\leq i\leq j\leq k\leq p+1}(-1)^{i+j+k}\Theta(T(X_i,X_j),\nabla_{X_k} Z, \dots)
\end{align}
We define $T_{\theta,Z}=0$ for 1-forms $\theta$.
\end{defn}
\begin{thm}
For arbitrary $\Theta \in \bigwedge^p(M)$ and  $Z \in \mathfrak{X}(M)$ we have
\begin{equation}\label{second-struc-eq-form-p-form}
d\omega_{\Theta,Z}=\Rc_{\Theta,Z}-\Psi_{\Theta,Z}+T_{\Theta,Z}.
\end{equation}

\end{thm}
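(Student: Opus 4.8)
The plan is to follow the template of the proof of the first structure equation \eqref{first-struc-eq-p-form}: apply the intrinsic Koszul formula for the exterior derivative to the $p$-form $\omega_{\Theta,Z}$, replace every Lie bracket by covariant derivatives and torsion through $[X,Y]=\nabla_XY-\nabla_YX-T(X,Y)$, and then recognise the surviving second-order terms as curvature via \eqref{Curvature}. Concretely, I would begin from
\begin{align*}
d\omega_{\Theta,Z}(X_1,\dots,X_{p+1})=&\sum_{1\le k\le p+1}(-1)^{k+1}X_k\big(\omega_{\Theta,Z}(X_1,\dots,\bar X_k,\dots,X_{p+1})\big)\\
&+\sum_{1\le k<l\le p+1}(-1)^{k+l}\omega_{\Theta,Z}([X_k,X_l],X_1,\dots,\bar X_k,\dots,\bar X_l,\dots,X_{p+1}),
\end{align*}
and then substitute Definition~\ref{def-forms-second-structure-eq} for $\omega_{\Theta,Z}$ into each of the two sums.

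First I would expand the derivative sum. Inserting the definition of $\omega_{\Theta,Z}$ turns each summand into a double sum, and since $X_k(f)=\nabla_{X_k}f$ on functions, the derivation property of $\nabla$ distributes $X_k$ across the tensor $\Theta(\nabla_{X_i}Z,\dots)$. This yields three families of terms: \textbf{(i)} terms carrying $\nabla_{X_k}\Theta$ evaluated on the distinguished slot $\nabla_{X_i}Z$; \textbf{(ii)} terms carrying a second covariant derivative $\nabla_{X_k}\nabla_{X_i}Z$ in that slot; and \textbf{(iii)} terms in which $\nabla_{X_k}$ lands on a spectator argument $X_m$, producing $\Theta(\nabla_{X_i}Z,\dots,\nabla_{X_k}X_m,\dots)$.

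Next I would treat the bracket sum by writing $[X_k,X_l]=\nabla_{X_k}X_l-\nabla_{X_l}X_k-T(X_k,X_l)$ and distinguishing where the bracket lands when $\omega_{\Theta,Z}$ is expanded. When the bracket occupies the distinguished $\nabla_{(\cdot)}Z$ slot it contributes $\Theta(\nabla_{[X_k,X_l]}Z,\dots)$, which is kept intact; combined with the family-(ii) derivatives $\nabla_{X_k}\nabla_{X_l}Z-\nabla_{X_l}\nabla_{X_k}Z$ this reconstitutes $\nabla_{X_k}\nabla_{X_l}Z-\nabla_{X_l}\nabla_{X_k}Z-\nabla_{[X_k,X_l]}Z=\Rc(X_k,X_l)Z$, assembling into $\Rc_{\Theta,Z}$. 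When instead the bracket sits in an ordinary slot (with some $\nabla_{X_m}Z$ in the distinguished slot), its covariant part $\nabla_{X_k}X_l-\nabla_{X_l}X_k$ reproduces the family-(iii) spectator terms with opposite sign and cancels them, while the $-T(X_k,X_l)$ part reorders (with the appropriate sign) into $\Theta(T(X_k,X_l),\nabla_{X_m}Z,\dots)$ and yields exactly the cyclic triple sum $T_{\Theta,Z}$; note this term requires three distinct indices, consistent with the stipulation $T_{\theta,Z}=0$ when $p=1$.

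Finally I would collect what survives. The family-(i) terms antisymmetrise, by the symmetry of the index pair, into $-\Psi_{\Theta,Z}$, while the curvature and torsion assembled above give $+\Rc_{\Theta,Z}$ and $+T_{\Theta,Z}$, which is precisely \eqref{second-struc-eq-form-p-form}. The main obstacle is entirely combinatorial rather than conceptual: unlike in the first structure equation, $\omega_{\Theta,Z}$ is itself a sum, so each Koszul term spawns a further sum over the position of the $\nabla Z$ slot, and the delicate point is to track the relative positions of this distinguished slot and the two Koszul indices $k,l$ so that the signs in families (ii) and (iii) and in the bracket expansion match. Verifying the \emph{complete} cancellation of the spectator terms, together with the correctly signed recombination of families (ii) and the bracket-in-$Z$-slot terms into $\Rc(X_k,X_l)Z$, is where essentially all of the care is required.
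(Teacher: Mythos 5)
Your proposal is correct and follows essentially the same route as the paper's own proof: the Koszul formula applied to $\omega_{\Theta,Z}$, the product rule splitting the derivative sum into the $\nabla\Theta$, $\nabla\nabla Z$, and spectator families, the bracket kept intact as $\nabla_{[X_k,X_l]}Z$ in the distinguished slot to reconstitute $\Rc(X_k,X_l)Z$, the torsion substitution for brackets in ordinary slots, and the cancellation of spectator terms. The paper's proof is exactly this computation carried out in full.
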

\begin{proof} Using the exterior derivative formula we have
\begin{align*}
d\omega_{\Theta,Z}&(X_1,\dots, X_{p+1})=\sum_{i=1}^{p+1} (-1)^{i+1} X_i(\omega_{\Theta,Z}(X_1,\dots, \bar X_i, \dots, X_{p+1}))\\
                   +&\sum_{1\leq i\leq j\leq p+1} (-1)^{i+j}\omega_{\Theta,Z}([X_i,X_j], X_1,\dots, \bar X_i, \dots, \bar X_j, \dots, X_{p+1}).
\end{align*}
With
\begin{align*}
&\sum_{i=1}^{p+1} (-1)^{i+1} X_i(\omega_{\Theta,Z}(X_1,\dots, \bar X_i, \dots, X_{p+1}))\\
=&\sum_{i<j}(-1)^{i+j+1}\big[X_i(\Theta(\nabla_{X_j} Z, X_1,\dots, \bar X_i, \dots, \bar X_j,\dots, X_{p+1}))\\
&\qquad \qquad -X_j(\Theta(\nabla_{X_i} Z, X_1,\dots, \bar X_i, \dots, \bar X_j,\dots, X_{p+1}))\big]
\end{align*}
\begin{align*}
&\sum_{i=1}^{p+1} (-1)^{i+1} X_i(\omega_{\Theta,Z}(X_1,\dots, \bar X_i, \dots, X_{p+1}))\\
=&\sum_{i<j}(-1)^{i+j+1}\big[\nabla_{X_i}\Theta(\nabla_{X_j} Z, X_1,\dots, \bar X_i, \dots, \bar X_j,\dots, X_{p+1})\\
&\qquad \qquad -\nabla_{X_j}\Theta(\nabla_{X_i} Z, X_1,\dots, \bar X_i, \dots, \bar X_j,\dots, X_{p+1})\\
&\qquad \qquad +\Theta(\nabla_{X_i}\nabla_{X_j}Z,X_1,\dots, \bar X_i, \dots, \bar X_j,\dots, X_{p+1})\\
&\qquad \qquad -\Theta(\nabla_{X_j}\nabla_{X_i}Z,X_1,\dots, \bar X_i, \dots, \bar X_j,\dots, X_{p+1})\big]\\
&+\cyclsum{X_iX_jX_k}{i<j<k}(-1)^{i+j+k}\big[\Theta(\nabla_{X_j}Z,\nabla_{X_i}X_k,X_1,\dots, \bar X_i, \dots, \bar X_j,\dots,\bar X_k,\dots, X_{p+1})\\
&\qquad \qquad -\Theta(\nabla_{X_i}Z,\nabla_{X_j}X_k,X_1,\dots, \bar X_i, \dots, \bar X_j,\dots,\bar X_k,\dots, X_{p+1})\big],
\end{align*}
and,
\begin{align*}
&\sum_{1\leq i\leq j\leq p+1} (-1)^{i+j}\omega_{\Theta,Z}([X_i,X_j], X_1,\dots, \bar X_i, \dots, \bar X_j, \dots, X_{p+1})\\
=&\sum_{1\leq i\leq j\leq p+1} (-1)^{i+j}\Theta(\nabla_{[X_i,X_j]}Z, X_1,\dots, \bar X_i, \dots, \bar X_j, \dots, X_{p+1})\\
&+\cyclsum{X_iX_jX_k}{i<j<k}(-1)^{i+j+k+1}\Theta([X_i,X_j],\nabla_{X_k}Z, X_1,\dots, \bar X_i, \dots, \bar X_j, \dots,\bar X_k,\dots, X_{p+1})\\
=&\sum_{1\leq i\leq j\leq p+1} (-1)^{i+j}\Theta(\nabla_{[X_i,X_j]}Z, X_1,\dots, \bar X_i, \dots, \bar X_j, \dots, X_{p+1})\\
&+\cycsum{X_iX_jX_k}{i<j<k}(-1)^{i+j+k}\Theta(T(X_i,X_j),\nabla_{X_k}Z, X_1,\dots, \bar X_i, \dots, \bar X_j, \dots,\bar X_k,\dots, X_{p+1})\\
&+\cyclsum{X_iX_jX_k}{i<j<k}(-1)^{i+j+k}\Theta(\nabla_{X_k}Z,\nabla_{X_i}X_j-\nabla_{X_j}X_i, X_1,\dots, \bar X_i, \dots, \bar X_j, \dots,\bar X_k,\dots, X_{p+1}),
\end{align*}
we obtain
\begin{align*}
&d\omega_{\Theta,Z}(X_1,\dots, X_{p+1})\\
&\qquad \qquad =\sum_{1\leq i\leq j\leq p+1}(-1)^{i+j
+1}\big[\Theta(\Rc(X_i,X_j)Z, X_1,\dots, \bar X_i, \dots, \bar X_j, \dots X_{p+1})\\
&\qquad \qquad \qquad +\nabla_{X_i}\Theta(\nabla_{X_j}Z, X_1, \dots,\bar X_i, \dots, \bar X_j,\dots, X_{p+1})\\
 &\qquad \qquad \qquad \qquad  -\nabla_{X_j}\Theta(\nabla_{X_i}Z,X_1, \dots,\bar X_i, \dots, \bar X_j,\dots, X_{p+1})\big]\\
 &\qquad \qquad +\cyclsum{X_iX_jX_k}{i<j<k}(-1)^{i+j+k}\Theta(T(X_i,X_j),\nabla_{X_k}Z, X_1,\dots, \bar X_i, \dots, \bar X_j, \dots,\bar X_k,\dots, X_{p+1}).
\end{align*}
Therefore $d\omega_{\Theta,Z}=\Rc_{\Theta,Z}-\Psi_{\Theta,Z}+T_{\Theta,Z}$ as required.
\end{proof}
\begin{cor}
For arbitrary $\theta \in \bigwedge^1(M)$ and  $Z \in \mathfrak{X}(M)$ we have the second structure equations:
\begin{equation}\label{second-struc-eq-form}
 \Rc_{\theta,Z}=d\omega_{\theta,Z}+\Psi_{\theta,Z}\quad \text{or}\quad \Rc_{\theta,Z} =d\omega_{\theta,Z}-\nabla\theta\wedge \nabla Z,
\end{equation}
where $\Psi_{\theta,Z}(X,Y)=\nabla_Y \theta (\nabla_X Z)-\nabla_X \theta (\nabla_Y Z)=-\nabla\theta\wedge\nabla Z (X,Y).$\\
\end{cor}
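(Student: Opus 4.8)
The plan is to read off the corollary as the $p=1$ instance of the preceding theorem, equation \eqref{second-struc-eq-form-p-form}, so that essentially no new computation is required. First I would set $\Theta=\theta$ with $\theta\in\bigwedge^1(M)$ in \eqref{second-struc-eq-form-p-form}, obtaining $d\omega_{\theta,Z}=\Rc_{\theta,Z}-\Psi_{\theta,Z}+T_{\theta,Z}$, and then solve for $\Rc_{\theta,Z}$.

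The one substantive point is the vanishing of $T_{\theta,Z}$. I would argue this directly from the defining formula in Definition \ref{def-forms-second-structure-eq}: each term of $T_{\Theta,Z}$ evaluates $\Theta$ after removing three distinct arguments $X_i,X_j,X_k$, so it requires at least three of the $p+1$ entries; for $p=1$ only two are present, so no such term exists and the sum is vacuous. Equivalently, a $1$-form $\theta$ cannot be fed the two slots $T(X_i,X_j)$ and $\nabla_{X_k}Z$ simultaneously. This is precisely the convention $T_{\theta,Z}=0$ recorded at the end of that definition, and it collapses \eqref{second-struc-eq-form-p-form} to $\Rc_{\theta,Z}=d\omega_{\theta,Z}+\Psi_{\theta,Z}$, the first asserted form.

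To recover the displayed $2$-form expressions I would then specialise the general $(p+1)$-form definitions. With $p=1$ each defining sum collapses to a single contribution: $\omega_{\Theta,Z}$ retains only its $i=1$ term, while $\Rc_{\Theta,Z}$ and $\Psi_{\Theta,Z}$ retain only the $i=1,\,j=2$ term, the diagonal $i=j$ contributions vanishing because $\Rc(X,X)=0$ and because the bracketed difference in $\Psi$ is then identically zero. Reading off the single surviving sign in each case gives $\omega_{\theta,Z}(X)=\theta(\nabla_X Z)$, $\Rc_{\theta,Z}(X,Y)=\theta(\Rc(X,Y)Z)$ and $\Psi_{\theta,Z}(X,Y)=(\nabla_Y\theta)(\nabla_X Z)-(\nabla_X\theta)(\nabla_Y Z)$, in agreement with the original $1$-form definitions. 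Invoking the identity $\Psi_{\theta,Z}=-\nabla\theta\wedge\nabla Z$ then converts the first form into the alternative $\Rc_{\theta,Z}=d\omega_{\theta,Z}-\nabla\theta\wedge\nabla Z$.

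There is no real obstacle here, since all the analytic content sits in the already-established theorem and what remains is a specialisation together with bookkeeping. The only step deserving genuine care is the vanishing of $T_{\theta,Z}$, which I would present via the empty-index-set argument above so that it reads as forced rather than merely assumed.
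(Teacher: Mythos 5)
Your proposal is correct and matches the paper's (implicit) route exactly: the corollary is just the $p=1$ specialisation of the theorem $d\omega_{\Theta,Z}=\Rc_{\Theta,Z}-\Psi_{\Theta,Z}+T_{\Theta,Z}$, with $T_{\theta,Z}=0$ by the stated convention and the general definitions collapsing to the original $1$-form ones, then rearranged and rewritten via $\Psi_{\theta,Z}=-\nabla\theta\wedge\nabla Z$. Your careful justification of why $T_{\theta,Z}=0$ is forced (the sum is vacuous for a $1$-form) is a nice touch but does not change the substance.
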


%
%

\begin{cor}\label{Bianchi Corr}
By taking exterior derivatives of these structure equations we have:
\begin{subequations} \label{Bianchi}
\begin{align}
&\text{Bianchi I:}\quad \quad dT_\theta=d\Xi_\theta=\Rc_\theta+\nabla\theta\wedge T,\label{Bianchi-form-1}\hfill\\
&\text{Bianchi II:}\quad \quad d\Rc_{\theta,Z}=d\Psi_{\theta,Z}=\nabla\theta\wedge\Rc_Z+\Rc_\theta\wedge\nabla Z.\hfill \label{Bianchi-form-2}
\end{align}
\end{subequations}
\end{cor}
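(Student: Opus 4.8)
The two leftmost equalities are free. Applying $d$ to the first structure equation \eqref{first-struc-eq-form}, $T_\theta=d\theta+\Xi_\theta$, and using $d^2\theta=0$ gives $dT_\theta=d\Xi_\theta$; applying $d$ to the second structure equation \eqref{second-struc-eq-form}, $\Rc_{\theta,Z}=d\omega_{\theta,Z}+\Psi_{\theta,Z}$, and using $d^2\omega_{\theta,Z}=0$ gives $d\Rc_{\theta,Z}=d\Psi_{\theta,Z}$. All the content is therefore in identifying these $3$-forms with the curvature-plus-torsion expressions on the right.

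My plan for that is to avoid a brute-force triple expansion of $d\Xi_\theta$ and $d\Psi_{\theta,Z}$ and instead feed the torsion and curvature $2$-forms back into the general first structure equation \eqref{first-struc-eq-p-form}. Taking $\Theta=T_\theta$ and $\Theta=\Rc_{\theta,Z}$ there gives $dT_\theta=T_{T_\theta}-\Xi_{T_\theta}$ and $d\Rc_{\theta,Z}=T_{\Rc_{\theta,Z}}-\Xi_{\Rc_{\theta,Z}}$, so everything reduces to the explicit definitions \eqref{def-Xi-Theta}, \eqref{def-T-Theta}. I would expand the $\Xi$-terms with the Leibniz rule $\nabla_X(\theta\circ A)=(\nabla_X\theta)\circ A+\theta\circ(\nabla_X A)$ (with $A=T$, respectively $A=\Rc(\cdot,\cdot)Z$), obtaining $T_{T_\theta}=\theta\big(\cycsum{XYW}T(T(X,Y),W)\big)$ and $\Xi_{T_\theta}=-\cycsum{XYW}\big[(\nabla_X\theta)(T(Y,W))+\theta((\nabla_XT)(Y,W))\big]$, together with the analogous three-term expansion for $\Rc_{\theta,Z}$ (in which $\theta$, the tensor $\Rc$, and $Z$ are each differentiated).

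The point is then that the classical Bianchi identities do the collapsing. For Bianchi I, $\cycsum{XYW}\big[\theta(T(T(X,Y),W))+\theta((\nabla_XT)(Y,W))\big]$ is $\theta$ applied to the right-hand side of \eqref{first-bianchi-identity}, hence equals $\cycsum{XYW}\theta(\Rc(X,Y)W)=\Rc_\theta$, while the surviving term $\cycsum{XYW}(\nabla_X\theta)(T(Y,W))$ is $\nabla\theta\wedge T$. For Bianchi II the $\theta((\nabla_X\Rc)(Y,W)Z)$ term coming from $\Xi_{\Rc_{\theta,Z}}$ and the $\theta(\Rc(T(X,Y),W)Z)$ term coming from $T_{\Rc_{\theta,Z}}$ cancel by the classical second Bianchi identity \eqref{second-bianchi-identity}, leaving exactly $\cycsum{XYW}\big[(\nabla_X\theta)(\Rc(Y,W)Z)+\theta(\Rc(Y,W)\nabla_XZ)\big]=\nabla\theta\wedge\Rc_Z+\Rc_\theta\wedge\nabla Z$.

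The main obstacle is bookkeeping, of two kinds. The first is routine but error-prone: matching the signs and index ranges of \eqref{def-Xi-Theta}, \eqref{def-T-Theta} through the cyclic sums so that \eqref{first-bianchi-identity} and \eqref{second-bianchi-identity} are recognised verbatim, including the antisymmetry identity $\cycsum{XYW}\Rc(T(X,Y),W)=-\cycsum{XYW}\Rc(X,T(Y,W))$ needed for the cancellation. The second, more conceptual, is fixing the meaning of the wedges on the right: $\nabla\theta\wedge T$ and $\nabla\theta\wedge\Rc_Z$ pair the covector value of $\nabla\theta$ with the vector value of $T$, respectively $\Rc_Z$, whereas $\Rc_\theta\wedge\nabla Z$ must be read as the curvature $2$-form contracted in its $Z$-slot against the vector-valued $1$-form $\nabla Z$, so that it is a genuine $3$-form and not a degree-four object. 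I would fix all these conventions once by checking the final $3$-forms against the basis identities \eqref{first-Bianchi-form}, \eqref{second-Bianchi-form}, under which $\nabla\theta\wedge\Rc_Z\leftrightarrow-\omega^a_c\wedge\Omega^c_b$ and $\Rc_\theta\wedge\nabla Z\leftrightarrow\Omega^a_c\wedge\omega^c_b$.
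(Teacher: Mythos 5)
Your proposal is correct, but it takes a genuinely different route from the paper's own proof, and the difference matters for what the result is supposed to demonstrate. The paper proves the rightmost equality of Bianchi I by direct expansion: a preparatory lemma expresses $\Rc_{\theta,Z}(X,Y)$ as $-(\nabla_X\nabla_Y\theta(Z)-\nabla_Y\nabla_X\theta(Z)-\nabla_{[X,Y]}\theta(Z))$, and then Proposition~\ref{Bianchi Prop1} expands $d\Xi_\theta$ from the definition of $d$ plus the Leibniz rule and recognises the curvature via that lemma; the corresponding equality for Bianchi II is deferred to Proposition~\ref{Bianchi Propn2}, where $d\Psi_{\theta,W}$ is expanded in the same direct fashion. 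Crucially, neither computation invokes the classical vector-field identities \eqref{first-bianchi-identity}, \eqref{second-bianchi-identity}; the whole point, stated in the remark immediately after the corollary, is that the Bianchi identities emerge purely from $d^2=0$ applied to the structure equations, with the equivalence to the vector versions proved afterwards as a separate matter (Propositions 3.1 and~\ref{Bianchi Propn2}). Your argument runs the logic in the opposite direction: you take \eqref{first-bianchi-identity} and \eqref{second-bianchi-identity} as known input (legitimate, since the paper quotes them from Crampin--Pirani) and use them to collapse the covariant-derivative terms. Your key device --- feeding $\Theta=T_\theta$ and $\Theta=\Rc_{\theta,Z}$ back into the $p$-form structure equation \eqref{first-struc-eq-p-form} so that $dT_\theta=T_{T_\theta}-\Xi_{T_\theta}$ and $d\Rc_{\theta,Z}=T_{\Rc_{\theta,Z}}-\Xi_{\Rc_{\theta,Z}}$ --- is sound and elegant; I checked the bookkeeping, and indeed $T_{T_\theta}(X,Y,W)=\cycsum{XYW}\theta(T(T(X,Y),W))$, $-\Xi_{T_\theta}(X,Y,W)=\cycsum{XYW}\bigl[(\nabla_X\theta)(T(Y,W))+\theta((\nabla_XT)(Y,W))\bigr]$, the three-term Leibniz expansion of $\nabla_X(\Rc_{\theta,Z})$ is right, and the antisymmetry identity $\cycsum{XYW}\Rc(T(X,Y),W)=-\cycsum{XYW}\Rc(X,T(Y,W))$ does effect the Bianchi~II cancellation; your reading of the wedge conventions also matches the paper's (compare \eqref{dT-Rtheta} and the parenthetical identification of $d\Psi_{\theta,W}$ inside the proof of Proposition~\ref{Bianchi Propn2}). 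What your route buys is economy: it reuses the already-proved Theorem giving \eqref{first-struc-eq-p-form} instead of repeating a triple $d$-expansion, and it is essentially the paper's equivalence propositions run in reverse (vector version $\Rightarrow$ form version). What it costs is the paper's conceptual claim: since your derivation presupposes identities obtained by covariant differentiation of the definitions, it cannot by itself support the assertion that the Bianchi identities are redundant given only the structure equations and the Jacobi identity ($d^2=0$), which is precisely what the paper's self-contained computation establishes.
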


\begin{rmk}
Corollary \ref{Bianchi Corr} demonstrates that the Bianchi identities are the exterior differential consequences of the structure equations. Since the identity $d^2=0$ is implied by the Jacobi identity it is clear that the Bianchi identities are redundant in the presence of the structure equations and the Jacobi identities.\newline
We could have generalised the Bianchi identities in terms of an arbitrary $p$-form $\Theta$ instead of a 1-form $\theta$ but this is sufficient for our purposes.
\end{rmk}

The proof of the last equality in the second Bianchi identity is deferred until proposition~\ref{Bianchi Propn2}, but we can demonstrate that for the first Bianchi identity after this lemma.
\begin{lem}
\begin{align*}
\Rc_\theta(X,Y,Z):=&\cycsum{XYZ}\Rc_{\theta,Z}(X,Y)\\
=&-\cycsum{XYZ}(\nabla_X\nabla_Y\theta(Z)-\nabla_Y\nabla_X\theta(Z)-\nabla_{[X,Y]}\theta(Z)).
\end{align*}
\end{lem}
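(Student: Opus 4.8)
The plan is to prove the stronger, \emph{un-summed} pointwise identity
$$\Rc_{\theta,Z}(X,Y)=\theta(\Rc(X,Y)Z)=-\big(\nabla_X\nabla_Y\theta(Z)-\nabla_Y\nabla_X\theta(Z)-\nabla_{[X,Y]}\theta(Z)\big),$$
and then apply the cyclic sum $\cycsum{XYZ}$ to both sides. Since $\Rc_\theta(X,Y,Z)$ is by definition exactly this cyclic sum of $\Rc_{\theta,Z}(X,Y)$, the lemma follows immediately. So the entire content is the observation that pairing the curvature of $\nabla$ on the vector field $Z$ against $\theta$ equals, up to sign, pairing the induced curvature on $T^*M$ acting on $\theta$ against $Z$; here $\nabla_X\nabla_Y\theta$ is read as the iterated covariant derivative $\nabla_X(\nabla_Y\theta)$ of $1$-forms.

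The only tool I would use is the defining duality (Leibniz) property of the covariant derivative on $1$-forms, $\theta(\nabla_W V)=W(\theta(V))-(\nabla_W\theta)(V)$ for all $W,V\in\X(M)$. Writing $\Rc(X,Y)Z=\nabla_X\nabla_Y Z-\nabla_Y\nabla_X Z-\nabla_{[X,Y]}Z$, I would expand each of $\theta(\nabla_X\nabla_Y Z)$, $\theta(\nabla_Y\nabla_X Z)$ and $\theta(\nabla_{[X,Y]}Z)$ using this relation: once to peel off the outer derivative, and then again (together with the Leibniz rule for differentiating the scalar pairing $(\nabla_Y\theta)(Z)$) to handle the two double terms.

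The one step needing care, and the real crux, is the bookkeeping of cancellations. Expanding $\theta(\nabla_X\nabla_Y Z)$ produces the scalar $X(Y(\theta(Z)))$, the wanted piece $-(\nabla_X\nabla_Y\theta)(Z)$, and the two mixed terms $-(\nabla_Y\theta)(\nabla_X Z)-(\nabla_X\theta)(\nabla_Y Z)$; the expansion of $\theta(\nabla_Y\nabla_X Z)$ yields the identical pair of mixed terms, so they cancel on subtraction. The surviving scalar pieces combine as $X(Y(\theta(Z)))-Y(X(\theta(Z)))=[X,Y](\theta(Z))$, which is then cancelled by the scalar part $[X,Y](\theta(Z))$ coming from $\theta(\nabla_{[X,Y]}Z)$. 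What remains is precisely $-(\nabla_X\nabla_Y\theta-\nabla_Y\nabla_X\theta-\nabla_{[X,Y]}\theta)(Z)$, giving the pointwise identity, and the cyclic sum then delivers the lemma.

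More conceptually, I would also remark why these cancellations are forced: the curvature operator is a tensor derivation that commutes with contractions and annihilates functions, so applying it to the function $\theta(Z)$ gives $0=\Rc(X,Y)(\theta(Z))=(\Rc(X,Y)\theta)(Z)+\theta(\Rc(X,Y)Z)$, which is the sign-flip identity at once. I expect the explicit expansion to be the safer presentation for the paper, since it relies only on the Leibniz rule already built into the definition of $\nabla\theta$, but I would cite the derivation viewpoint as the structural reason the mixed and second-order terms must disappear.
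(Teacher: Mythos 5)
Your proposal is correct and takes essentially the same route as the paper: the paper likewise proves the stronger pointwise identity $\Rc_{\theta,Z}(X,Y)=-\big(\nabla_X\nabla_Y\theta(Z)-\nabla_Y\nabla_X\theta(Z)-\nabla_{[X,Y]}\theta(Z)\big)$ by repeatedly applying the Leibniz duality rule $\theta(\nabla_W V)=W(\theta(V))-(\nabla_W\theta)(V)$, with exactly the cancellations of mixed terms and second-order scalar terms that you describe, and the cyclic sum then gives the lemma. Your closing remark about curvature acting as a derivation that annihilates functions and commutes with contractions is a nice structural explanation, though it is not part of the paper's argument.
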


\begin{proof}
\begin{align*}
\Rc_{\theta,Z}(X,Y):=\ &\theta(\nabla_X\nabla_YZ-\nabla_Y\nabla_XZ-\nabla_{[X,Y]}Z)\\
                        =\ &(\nabla_X(\theta(\nabla_YZ))-\nabla_X\theta(\nabla_YZ)-\nabla_Y(\theta(\nabla_XZ))\\
                        &+\nabla_Y\theta(\nabla_XZ))-(\nabla_{[X,Y]}(\theta(Z))-\nabla_{[X,Y]}\theta(Z))\\
                        =\ &\nabla_X(\nabla_Y(\theta(Z)))-\nabla_X(\nabla_Y\theta(Z))-\nabla_X\theta(\nabla_YZ)\\
                        &-\nabla_Y(\nabla_X(\theta(Z)))+\nabla_Y(\nabla_X\theta(Z))+\nabla_Y\theta(\nabla_XZ)\\
                        &-\nabla_{[X,Y]}(\theta(Z))+\nabla_{[X,Y]}\theta(Z)\\
                        =\ &-\nabla_X\nabla_Y\theta(Z)-\nabla_Y\theta(\nabla_XZ)+\nabla_Y\nabla_X\theta(Z)\\
                        &+\nabla_X\theta(\nabla_YZ)-\nabla_X\theta(\nabla_YZ)+\nabla_Y\theta(\nabla_XZ)+\nabla_{[X,Y]}\theta(Z)\\
                        =\ &-(\nabla_X\nabla_Y\theta(Z)-\nabla_Y\nabla_X\theta(Z)-\nabla_{[X,Y]}\theta(Z)).
\end{align*}
\end{proof}
\begin{propn} \label{Bianchi Prop1} The first Bianchi identity \eqref{Bianchi-form-1} can be written as
\begin{equation}\label{dT-Rtheta}
dT_{\theta}(X,Y,Z)=\cycsum{XYZ}\Rc_{\theta,Z}(X,Y)+\cycsum{XYZ}\nabla_X\theta(T(Y,Z)),
\end{equation}
equivalently,
\begin{equation}\label{dT-Rtheta2}
dT_\theta=\Rc_\theta +\nabla\theta\wedge T.
\end{equation}
\end{propn}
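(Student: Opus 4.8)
The plan is to establish \eqref{dT-Rtheta2} directly, from which \eqref{dT-Rtheta} follows by merely unpacking the definitions $\Rc_\theta(X,Y,Z)=\cycsum{XYZ}\Rc_{\theta,Z}(X,Y)$ and $\nabla\theta\wedge T(X,Y,Z)=\cycsum{XYZ}\nabla_X\theta(T(Y,Z))$. The starting point is the first structure equation \eqref{first-struc-eq-form}, namely $T_\theta=d\theta+\Xi_\theta$; applying $d$ and using $d^2\theta=0$ gives $dT_\theta=d\Xi_\theta$, which is precisely the first equality already recorded in Corollary~\ref{Bianchi Corr}. Thus the entire content of the proposition reduces to computing $d\Xi_\theta$ and identifying the result.

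First I would expand $d\Xi_\theta(X,Y,Z)$ with the exterior derivative formula for a $2$-form, written in the compact cyclic form $d\Xi_\theta(X,Y,Z)=\cycsum{XYZ}\bigl(X(\Xi_\theta(Y,Z))-\Xi_\theta([X,Y],Z)\bigr)$, and substitute $\Xi_\theta(Y,Z)=\nabla_Z\theta(Y)-\nabla_Y\theta(Z)$ from Definition~\ref{def-T-Theta-Xi-Theta}. The key computational step is to convert each directional derivative of a scalar back into covariant data via $X(\alpha(V))=(\nabla_X\alpha)(V)+\alpha(\nabla_X V)$, applied to the $1$-forms $\alpha=\nabla_Z\theta$ and $\alpha=\nabla_Y\theta$. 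This separates the expression into two kinds of terms: "second-order" terms of the form $(\nabla_X\nabla_Y\theta)(Z)$ together with the $(\nabla_{[X,Y]}\theta)(Z)$ contributions arising from the bracket argument, and "mixed" terms in which a single $\nabla\theta$ is paired with a covariant derivative $\nabla_X V$ or a bracket $[X,Y]$.

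I would then take the cyclic sum and regroup. The second-order terms are exactly the combination appearing in the preceding Lemma, so their cyclic sum collapses to $\cycsum{XYZ}\Rc_{\theta,Z}(X,Y)=\Rc_\theta(X,Y,Z)$. For the mixed terms, collecting the three contributions that carry a fixed factor $\nabla_X\theta$ yields $\nabla_X\theta(\nabla_Y Z-\nabla_Z Y-[Y,Z])=\nabla_X\theta(T(Y,Z))$ by the definition \eqref{Torsion} of torsion, and similarly under cyclic permutation; summing gives $\cycsum{XYZ}\nabla_X\theta(T(Y,Z))=\nabla\theta\wedge T$. Adding the two groups produces \eqref{dT-Rtheta2}.

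The main obstacle is purely the sign-and-index bookkeeping of the cyclic sums: one must check that the second-order covariant-derivative terms align precisely with the combination in the Lemma, so that the curvature emerges rather than some spurious symmetric remainder, and that the surviving first-order terms assemble into the torsion through the defining identity $T(Y,Z)=\nabla_Y Z-\nabla_Z Y-[Y,Z]$. No genuinely new idea is required beyond recognising that the torsion is exactly what remains once the bracket terms are traded for covariant derivatives.
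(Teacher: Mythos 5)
Your proposal is correct and follows essentially the same route as the paper: reduce to computing $d\Xi_\theta$ via $d^2\theta=0$ and the first structure equation, expand the cyclic exterior-derivative formula with the product rule $X(\alpha(V))=(\nabla_X\alpha)(V)+\alpha(\nabla_XV)$, identify the second-order terms with $\Rc_\theta$ using the preceding Lemma, and collect the remaining terms into $\nabla\theta\wedge T$ via the definition of torsion. The only cosmetic difference is that you group the mixed terms by a fixed factor $\nabla_X\theta$ to get $\nabla_X\theta(T(Y,Z))$ directly, whereas the paper first obtains $\cycsum{XYZ}\nabla_Z\theta(T(X,Y))$ and then relabels the cyclic sum.
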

\begin{proof} We begin with the right hand side of \eqref{Bianchi-form-1}:
\begin{align*}
d\Xi_{\theta}(X,Y,Z)&=\cycsum{XYZ}\nabla_X(\Xi_{\theta}(Y,Z))-\cycsum{XYZ}\Xi_{\theta}([X,Y],Z)\\
                    &=\cycsum{XYZ}\nabla_X(\nabla_Z\theta(Y)-\nabla_Y\theta(Z))-\cycsum{XYZ}(\nabla_Z\theta([X,Y])-\nabla_{[X,Y]}\theta(Z))\\
                    &=\cycsum{XYZ}(\nabla_X\nabla_Z\theta(Y)+\nabla_Z\theta(\nabla_XY)-\nabla_X\nabla_Y\theta(Z)-\nabla_Y\theta(\nabla_XZ))\\
                    &-\cycsum{XYZ}(\nabla_Z\theta([X,Y])-\nabla_{[X,Y]}\theta(Z))\\
                    &=-\cycsum{XYZ}(\nabla_X\nabla_Y\theta(Z)-\nabla_Y\nabla_X\theta(Z)-\nabla_{[X,Y]}\theta(Z))\\
                    &+\cycsum{XYZ}\nabla_Z\theta(\nabla_XY)-\cycsum{XYZ}\nabla_Z\theta(\nabla_YX)-\cycsum{XYZ}(\nabla_Z\theta([X,Y])\\
                    &=\cycsum{XYZ}\theta(\Rc(X,Y),Z)+\cycsum{XYZ}\nabla_Z\theta(T(X,Y))\ \text{from the lemma}\\
                    &=\cycsum{XYZ}\Rc_{\theta,Z}(X,Y)+\cycsum{XYZ}\nabla_X\theta(T(Y,Z)).
\end{align*}
Hence
\begin{equation*}
dT_{\theta}(X,Y,Z)=\cycsum{XYZ}\Rc_{\theta,Z}(X,Y)+\cycsum{XYZ}\nabla_X\theta(T(Y,Z))
\end{equation*}
or
\begin{equation*}
dT_\theta=\Rc_\theta +\nabla\theta\wedge T.
\end{equation*}
\end{proof}
\section{Relationship to other versions}
In this section we apply due diligence to demonstrate that we really do have the Bianchi identities. The proofs will emphasise the flexibility and utility of our intrinsic approach.

In the proofs of equivalence that follow we have used the definitions of $T, \Rc, T_\theta, \Rc_{\theta,Z}, \omega_{\theta,Z}, \Theta^a, \Omega^a_b, \omega^a_b$ where indicated. In other words, we are generally demonstrating the equivalence of different forms of the Bianchi identities in the presence of equivalent forms of the definitions of $T$ and $\Rc$.
\subsection*{Vector versions}

\begin{propn}
The form version of the first Bianchi identities \eqref{Bianchi-form-1} are equivalent to the vector field version of the first Bianchi identities \eqref{first-bianchi-identity}.
\end{propn}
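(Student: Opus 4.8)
The plan is to start from the already-established form of Bianchi~I in Proposition~\ref{Bianchi Prop1}, namely equation~\eqref{dT-Rtheta}, and to peel the arbitrary $1$-form $\theta$ off both sides so as to leave a pure vector-field identity. The key observation is that $T_\theta=\theta\circ T$ and $\Rc_{\theta,Z}(X,Y)=\theta(\Rc(X,Y)Z)$ are function-linear in $\theta$, so any identity holding for every $\theta$ descends to an identity among vector fields.

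First I would expand the left-hand side $dT_\theta(X,Y,Z)$ a second time, independently of Proposition~\ref{Bianchi Prop1}, using the exterior derivative formula for the $2$-form $T_\theta$, namely $dT_\theta(X,Y,Z)=\cycsum{XYZ}\big(X(T_\theta(Y,Z))-T_\theta([X,Y],Z)\big)$. Writing $T_\theta(Y,Z)=\theta(T(Y,Z))$ and applying the product rule $X(\theta(W))=(\nabla_X\theta)(W)+\theta(\nabla_X W)$ with $W=T(Y,Z)$ separates $dT_\theta$ into a $\nabla\theta$-part and a $\theta$-part:
$$dT_\theta(X,Y,Z)=\cycsum{XYZ}(\nabla_X\theta)(T(Y,Z))+\theta\Big(\cycsum{XYZ}\nabla_X(T(Y,Z))-\cycsum{XYZ}T([X,Y],Z)\Big).$$
Comparing this with \eqref{dT-Rtheta} and using $\cycsum{XYZ}\Rc_{\theta,Z}(X,Y)=\theta\big(\cycsum{XYZ}\Rc(X,Y)Z\big)$, the two $\cycsum{XYZ}(\nabla_X\theta)(T(Y,Z))$ terms cancel, and since $\theta$ is arbitrary I may strip it off to obtain the vector identity $\cycsum{XYZ}\nabla_X(T(Y,Z))-\cycsum{XYZ}T([X,Y],Z)=\cycsum{XYZ}\Rc(X,Y)Z$.

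It then remains to match this against \eqref{first-bianchi-identity}. I would substitute the tensorial covariant derivative $(\nabla_X T)(Y,Z)=\nabla_X(T(Y,Z))-T(\nabla_X Y,Z)-T(Y,\nabla_X Z)$ and rewrite $[X,Y]=\nabla_X Y-\nabla_Y X-T(X,Y)$ from the definition~\eqref{Torsion}, so that $T([X,Y],Z)=T(\nabla_X Y,Z)-T(\nabla_Y X,Z)-T(T(X,Y),Z)$. After these substitutions all the bare $\nabla_X(T(Y,Z))$ terms cancel, and what remains is the claim that $\cycsum{XYZ}T(\nabla_Y X,Z)=-\cycsum{XYZ}T(Y,\nabla_X Z)$; this follows purely from the antisymmetry of $T$ together with a relabelling of the cyclic sum. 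Assembling the pieces turns the stripped identity into $\cycsum{XYZ}\Rc(X,Y)Z=\cycsum{XYZ}(\nabla_X T)(Y,Z)+\cycsum{XYZ}T(T(X,Y),Z)$, which is exactly \eqref{first-bianchi-identity}; each step is reversible, giving the asserted equivalence.

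The main obstacle is purely bookkeeping: keeping the signs of the cyclic sums straight while carefully distinguishing the covariant derivative $\nabla_X(T(Y,Z))$ of the \emph{value} of the torsion from the tensorial derivative $(\nabla_X T)(Y,Z)$, since it is precisely the correction terms $T(\nabla_X Y,Z)$ and $T(Y,\nabla_X Z)$ produced by this distinction that must be absorbed via the torsion definition and its antisymmetry. Getting the final antisymmetry-and-relabelling cancellation right is the one place where an error in sign conventions would derail the argument.
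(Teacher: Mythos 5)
Your proposal is correct and takes essentially the same route as the paper's proof: both expand $dT_\theta$ via the exterior derivative formula and the product rule, cancel the $\cycsum{XYZ}\nabla_X\theta(T(Y,Z))$ terms against \eqref{dT-Rtheta}, and convert the bracket and correction terms via the definition \eqref{Torsion} of $T$ together with antisymmetry and cyclic relabelling, finally invoking the arbitrariness of $\theta$. The only difference is cosmetic: you strip $\theta$ off before doing the tensorial bookkeeping, whereas the paper carries out that bookkeeping under $\theta$ and strips it at the very end.
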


\begin{proof}
We have:
\begin{align*}
dT_{\theta}(X,Y,Z)&=\cycsum{XYZ}\nabla_X(\theta(T(Y,Z)))-\cycsum{XYZ}\theta(T([X,Y],Z)) \quad \text{(definitions of $d$ and $T_\theta$)}\\
                      &=\cycsum{XYZ}\nabla_X\theta(T(Y,Z))+\cycsum{XYZ}\theta(\nabla_XT(Y,Z)+\cycsum{XYZ}\theta(T(\nabla_XY,Z))\\
                      &+\cycsum{XYZ}\theta(T(Y,\nabla_XZ))-\cycsum{XYZ}\theta(T([X,Y],Z)) \quad \text{(product rule)}.
\end{align*}
Hence
\begin{align*}
dT_{\theta}(X,Y,Z) &=\cycsum{XYZ}\nabla_X\theta(T(Y,Z))+\cycsum{XYZ}\theta(\nabla_XT(Y,Z))+\cycsum{XYZ}\theta(T(\nabla_XY,Z))\\
                      &-\cycsum{XYZ}\theta(T(\nabla_YX,Z))-\cycsum{XYZ}\theta(T([X,Y],Z))\quad (\text{since}\ \cycsum{XYZ}=\cycsum{ZXY})\\
                      &=\cycsum{XYZ}\nabla_X\theta(T(Y,Z))+\cycsum{XYZ}\theta(\nabla_XT(Y,Z)+\cycsum{XYZ}\theta(T(T(X,Y),Z))
\end{align*}
where, in the last line, we have used the definition of $T$.
Using this in the left hand side of equation \eqref{dT-Rtheta} and because $\theta$ is arbitrary we get:
\begin{align*}
&\cycsum{XYZ}\theta(\nabla_XT(Y,Z))+\cycsum{XYZ}\theta(T(T(X,Y),Z))=\cycsum{XYZ}\theta(R(X,Y),Z)\\
\Leftrightarrow & \cycsum{XYZ} R(X,Y)Z=\cycsum{XYZ}\nabla_XT(Y,Z)+\cycsum{XYZ} T(T(X,Y),Z).
\end{align*}
\end{proof}
 The next proposition, demonstrating the flexibility of this formulation,  gives what appears at first to be a strange result.
\begin{propn}
The vector field version of the first Bianchi identities can be recovered by the 2 structure equations without further differentiation.
\end{propn}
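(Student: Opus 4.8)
The plan is to exploit Remark~1 together with the structure--equation corollaries \eqref{first-struc-eq-form} and \eqref{second-struc-eq-form}: evaluated on vector fields, these two structure equations are merely repackagings of the defining relations \eqref{Torsion} and \eqref{Curvature} for $T$ and $\Rc$. Consequently, ``recovering'' the vector first Bianchi identity from the two structure equations amounts to deriving \eqref{first-bianchi-identity} directly from \eqref{Torsion} and \eqref{Curvature} by purely algebraic manipulation, with no exterior derivative taken anywhere --- this is precisely the content of ``without further differentiation.'' The only non-formal ingredient will be the Jacobi identity $\cycsum{XYZ}[X,[Y,Z]]=0$, which (as the preceding remark observes) is the vector-field shadow of $d^2=0$.

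First I would expand $\cycsum{XYZ}\Rc(X,Y)Z$ using \eqref{Curvature} into $\cycsum{XYZ}(\nabla_X\nabla_YZ-\nabla_Y\nabla_XZ-\nabla_{[X,Y]}Z)$. I would then substitute \eqref{Torsion} in the form $\nabla_YZ=\nabla_ZY+[Y,Z]+T(Y,Z)$ into the term $\cycsum{XYZ}\nabla_X\nabla_YZ$ and reindex. The key combinatorial observation is that $\cycsum{XYZ}\nabla_X\nabla_ZY$ and $\cycsum{XYZ}\nabla_Y\nabla_XZ$ consist of the same three summands, so they are equal and the second-order terms cancel. This leaves $\cycsum{XYZ}\nabla_X(T(Y,Z))+\cycsum{XYZ}\nabla_X[Y,Z]-\cycsum{XYZ}\nabla_{[X,Y]}Z$.

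Next I would apply \eqref{Torsion} once more to the commutator--connection terms, writing $\nabla_X[Y,Z]-\nabla_{[Y,Z]}X=T(X,[Y,Z])+[X,[Y,Z]]$ cyclically, and invoke the Jacobi identity to annihilate $\cycsum{XYZ}[X,[Y,Z]]$, arriving at $\cycsum{XYZ}\Rc(X,Y)Z=\cycsum{XYZ}\nabla_X(T(Y,Z))+\cycsum{XYZ}T(X,[Y,Z])$. Finally I would reconcile this with the tensorial right-hand side of \eqref{first-bianchi-identity}: split $\nabla_X(T(Y,Z))$ by the Leibniz rule into $(\nabla_XT)(Y,Z)+T(\nabla_XY,Z)+T(Y,\nabla_XZ)$, and expand $\cycsum{XYZ}T(X,[Y,Z])$ via \eqref{Torsion} and antisymmetry into $\cycsum{XYZ}T(T(X,Y),Z)$ minus exactly the two sums $\cycsum{XYZ}T(\nabla_XY,Z)$ and $\cycsum{XYZ}T(Y,\nabla_XZ)$; the Leibniz corrections cancel these, yielding \eqref{first-bianchi-identity}.

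The main obstacle is the cyclic-sum bookkeeping rather than any conceptual difficulty: one must reindex carefully to see both the cancellation $\cycsum{XYZ}\nabla_X\nabla_ZY=\cycsum{XYZ}\nabla_Y\nabla_XZ$ and, at the end, that the Leibniz corrections from $\nabla_X(T(Y,Z))$ cancel precisely against the connection terms emerging from $T(X,[Y,Z])$, leaving only $T(T(X,Y),Z)$. The single essential analytic input is the Jacobi identity; everything else is formal, which is exactly why no exterior differentiation of the structure equations is required.
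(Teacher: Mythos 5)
Your computation is correct---the cancellation of the second-order terms, the use of the Jacobi identity, and the final Leibniz/torsion reconciliation all check out---but it takes a genuinely different route from the paper. You dissolve the structure equations at the outset, arguing that evaluated on vector fields they are equivalent to the definitions \eqref{Torsion} and \eqref{Curvature}, and you then run the classical textbook derivation of \eqref{first-bianchi-identity} from those definitions: expand $\cycsum{XYZ}\Rc(X,Y)Z$, substitute $\nabla_YZ=\nabla_ZY+[Y,Z]+T(Y,Z)$, cancel, invoke Jacobi, reconcile with the Leibniz rule. The paper instead keeps both structure equations in play as formal tools: it substitutes the connection one-form $\omega_{\theta,Z}$ for $\theta$ in the first structure equation, giving $d\omega_{\theta,Z}=T_{\omega_{\theta,Z}}-\Xi_{\omega_{\theta,Z}}$, feeds this into the second structure equation \eqref{second-struc-eq-form} so that $d\omega_{\theta,Z}$ is eliminated, leaving $\Rc_{\theta,Z}=T_{\omega_{\theta,Z}}-\Xi_{\omega_{\theta,Z}}+\Psi_{\theta,Z}$, and then expands these three forms and takes the cyclic sum (the Jacobi identity enters there too, silently, when $\cycsum{XYZ}[[X,Y],Z]=0$ removes the iterated brackets). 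The difference matters for what the proposition is advertising: the substitution $\theta\mapsto\omega_{\theta,Z}$ exhibits the flexibility of structure equations valid for an \emph{arbitrary} one-form---they can be applied to the connection forms themselves, and the vector Bianchi identity drops out of that self-application---whereas your argument proves a fact that is true independently of the paper's formalism (it is essentially the Kobayashi--Nomizu computation), so the ``strange result'' the authors want to showcase never appears. Two smaller points: Remark 1 of the paper concerns the Cartan equations \eqref{first-structure-equation}, \eqref{second-structure-equation}, not \eqref{first-struc-eq-form} and \eqref{second-struc-eq-form}, so the recovery of \eqref{Torsion} and \eqref{Curvature} from the latter (evaluate on vector fields, expand $d\theta$ and $\nabla\theta$, and use that $\theta$ and $Z$ are arbitrary) should be spelled out, since it is the step that licenses your reduction; once that is done, your proof does establish the literal claim, since no exterior derivative of the structure equations is ever taken.
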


\begin{proof}
Replacing $\theta$ by $\omega_{\theta,Z}$ in the first structure equation gives:
$$d\omega_{\theta,Z}=T_{\omega_{\theta,Z}}-\Xi_{\omega_{\theta,Z}}.$$
Substituting this into \eqref{second-struc-eq-form}, we get
\begin{align}\label{structure-eq-form}
\Rc_{\theta,Z}=T_{\omega_{\theta,Z}}-\Xi_{\omega_{\theta,Z}}+\Psi_{\theta,Z}.
\end{align}
Evaluating \eqref{structure-eq-form} on $(X,Y)$, we have:
\begin{align}\label{structure-eq-form-act}
\Rc_{\theta,Z}(X,Y)=T_{\omega_{\theta,Z}}(X,Y)-\Xi_{\omega_{\theta,Z}}(X,Y)+\Psi_{\theta,Z}(X,Y).
\end{align}
where
\begin{align*}
T_{\omega_{\theta,Z}}(X,Y)&=\omega_{\theta,Z}(T(X,Y))\\
                          &=\theta(\nabla_{T(X,Y)}Z)\\
                          &=\theta(T(T(X,Y),Z)+\nabla_Z T(X,Y)+[T(X,Y),Z]+T(\nabla_Z X,Y)+T(X,\nabla_Z Y)),\\
\Psi_{\theta,Z}(X,Y)&=(\nabla_Y\theta)(\nabla_X Z)-(\nabla_X\theta)(\nabla_Y Z)\\
                    &=\nabla_Y(\theta(\nabla_X Z))-\theta(\nabla_Y \nabla_X Z)
                    -\nabla_X(\theta(\nabla_Y Z))+\theta(\nabla_X \nabla_Y Z),\\
\Xi_{\omega_{\theta,Z}}(X,Y)&=\nabla_Y\omega_{\theta,Z}(X)-\nabla_X\omega_{\theta,Z}(Y)\\
                            &=\nabla_Y(\omega_{\theta,Z}(X))-\omega(\nabla_Y X)-\nabla_X(\omega_{\theta,Z}(Y))+\omega_{\theta,Z}(\nabla_X Y)\\
                            &=\nabla_Y(\theta(\nabla_X Z))-\theta(\nabla_{\nabla_Y X} Z)-\nabla_X(\theta(\nabla_YZ))+\theta(\nabla_{\nabla_X Y} Z).
\end{align*}
Substituting these into \eqref{structure-eq-form-act} gives:
\begin{align*}
\theta(\Rc(X,Y)Z)&=\theta(T(T(X,Y),Z)+\nabla_Z T(X,Y)+[T(X,Y),Z]+T(\nabla_Z X,Y)\\
                 &+T(X,\nabla_Z Y)+\nabla_X \nabla_Y Z-\nabla_Y \nabla_X Z-\nabla_{\nabla_X Y} Z+\nabla_{\nabla_Y X} Z)\\
                 &=\theta(T(T(X,Y),Z)+\nabla_Z T(X,Y)-T(\nabla_X Y,Z)+T(\nabla_Y X,Z)-[[X,Y],Z]\\
                 &+T(\nabla_Z X,Y)+T(X,\nabla_Z Y)+\nabla_X \nabla_Y Z-\nabla_Y \nabla_X Z+\nabla_Z \nabla_Y X -\nabla_Z \nabla_X Y).
\end{align*}
It follows that
\begin{align}\label{Bianchi-vector-form}
\cycsum{XYZ}(\Rc(X,Y)Z=\cycsum{XYZ}(T(T(X,Y),Z)+\nabla_X T(Y,Z))
\end{align}
as required.

\end{proof}

\begin{propn}\label{Bianchi Propn2}
The form version of the second Bianchi identity \eqref{Bianchi-form-2} is equivalent to the vector field version \eqref{second-bianchi-identity}.
\end{propn}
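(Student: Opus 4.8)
The plan is to compute the three-form $d\Rc_{\theta,Z}$ honestly and read the vector identity off it. Writing the three arguments as $X,Y,W$ and using the exterior-derivative formula together with $\Rc_{\theta,Z}(X,Y)=\theta(\Rc(X,Y)Z)$, I would first obtain
$$d\Rc_{\theta,Z}(X,Y,W)=\cycsum{XYW}\big(\nabla_X(\theta(\Rc(Y,W)Z))-\theta(\Rc([X,Y],W)Z)\big).$$
Applying the Leibniz rule to $\nabla_X(\theta(\Rc(Y,W)Z))$ and the tensorial product rule
$$\nabla_X(\Rc(Y,W)Z)=(\nabla_X\Rc)(Y,W)Z+\Rc(\nabla_XY,W)Z+\Rc(Y,\nabla_XW)Z+\Rc(Y,W)\nabla_XZ$$
splits the result into four cyclic sums: (A) the terms carrying $\nabla_X\theta$; (B) the genuine curvature-derivative terms $\theta((\nabla_X\Rc)(Y,W)Z)$; (C) the terms carrying $\nabla_XZ$; and (D) the first-order terms $\theta(\Rc(\nabla_XY,W)Z)+\theta(\Rc(Y,\nabla_XW)Z)$ together with the bracket terms $-\theta(\Rc([X,Y],W)Z)$.

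Next I would identify the two wedge products on the right-hand side of \eqref{Bianchi-form-2} with groups (A) and (C). By the contraction-wedge convention already used for $\nabla\theta\wedge T$ in Proposition~\ref{Bianchi Prop1}, group (A) is exactly $\nabla\theta\wedge\Rc_Z(X,Y,W)=\cycsum{XYW}(\nabla_X\theta)(\Rc(Y,W)Z)$, while group (C), in which $\nabla_XZ$ occupies the curvature's acting slot, is $\Rc_\theta\wedge\nabla Z(X,Y,W)=\cycsum{XYW}\theta(\Rc(Y,W)\nabla_XZ)$. Since the honest computation yields $d\Rc_{\theta,Z}=A+B+C+D$ whereas the form identity \eqref{Bianchi-form-2} asserts $d\Rc_{\theta,Z}=A+C$, the form version is logically equivalent to the single statement $B+D=0$.

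The crux is simplifying group (D). Using the antisymmetry $\Rc(A,B)=-\Rc(B,A)$ in the first two slots together with cyclic relabelling, I would recombine, for each undifferentiated spectator, the three relevant terms into $\nabla_XY-\nabla_YX-[X,Y]=T(X,Y)$, collapsing (D) to $\cycsum{XYW}\theta(\Rc(T(X,Y),W)Z)$. Then $B+D=0$ reads
$$\cycsum{XYW}\theta\big((\nabla_X\Rc)(Y,W)Z\big)+\cycsum{XYW}\theta\big(\Rc(T(X,Y),W)Z\big)=0.$$
Because $\theta$ and $Z$ are arbitrary, this is equivalent to the endomorphism identity $\cycsum{XYW}(\nabla_X\Rc)(Y,W)=-\cycsum{XYW}\Rc(T(X,Y),W)$; a final cyclic relabelling together with antisymmetry turns the right-hand side into $\cycsum{XYW}\Rc(X,T(Y,W))$, which is precisely \eqref{second-bianchi-identity}.

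I expect the main obstacle to be the bookkeeping in group (D): tracking the nine terms produced by the three cyclic rotations of the two first-order terms and the bracket term, and verifying that antisymmetry lines them up three-at-a-time into $T(X,Y)$ and its cyclic partners with no spurious leftovers. The only other delicate point is pinning down the orientation and sign conventions of the two contraction-wedges so that (A) and (C) match \eqref{Bianchi-form-2} exactly; once those signs are fixed the equivalence follows immediately.
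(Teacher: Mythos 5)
Your proposal is correct and follows essentially the same route as the paper's proof: expand $d\Rc_{\theta,Z}$ by the exterior-derivative formula, split it via the Leibniz and tensorial product rules, collapse the first-order and bracket terms into torsion terms using the definition of $T$, and identify the remaining $\nabla\theta$ and $\nabla Z$ groups with the wedge products $\nabla\theta\wedge\Rc_Z+\Rc_\theta\wedge\nabla Z$ (which the paper realises as $d\Psi_{\theta,Z}$), so that the form identity is equivalent to the vanishing of the curvature-derivative plus torsion terms, whence the vector identity follows by arbitrariness of $\theta$ and $Z$. The only cosmetic difference is that the paper packages groups (A) and (C) as the explicit computation of $d\Psi_{\theta,Z}$, thereby also discharging the equality deferred from Corollary~\ref{Bianchi Corr}, rather than matching the wedge products directly as you do.
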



\begin{proof}
Acting the second Bianchi identity $d\Rc_{\theta,W}=d\Psi_{\theta,W}$ on a triple of vector fields $(X,Y,Z)$ gives (not every line in the calculation is included)

\begin{align*}
d\Rc_{\theta,W}(X,Y,Z)&=\cycsum{XYZ} \nabla_X(\Rc_{\theta,W}(Y,Z))-\cycsum{XYZ}  \Rc_{\theta,W}([X,Y],Z)\\
                          &=\cycsum{XYZ}\nabla_X(\theta(\Rc(Y,Z)W))-\cycsum{XYZ}  \theta(\Rc([X,Y],Z)W)\quad \text{(definition of $d$ and $\Rc_{\theta,W}$)}\\
                          &=\theta(\cycsum{XYZ}\nabla_X\Rc(Y,Z)W+\cycsum{XYZ}\Rc(T(X,Y),Z)W)\\
                          &+\cycsum{XYZ} \nabla_X\theta(\Rc(Y,Z)W)+\theta(\cycsum{XYZ}\Rc(Y,Z)\nabla_XW)\quad \text{(product rule and defn. of $T$)}
\end{align*}

and

\begin{align*}
d\Psi_{\theta,W}(X,Y,Z)&=\cycsum{XYZ}\nabla_X(\Psi_{\theta,W}(Y,Z))-\cycsum{XYZ}\Psi_{\theta,W}([X,Y],Z)\\
                           &=\cycsum{XYZ}\nabla_X\theta(\Rc(Y,Z)W)+\theta(\cycsum{XYZ}\Rc(Y,Z)\nabla_XW)\quad \text{(defn. of $d$, $\Psi_{\Theta,W}$, $\Rc$, etc.)}
\end{align*}

\noindent (and so $d\Psi_{\theta,W}=\nabla\theta\wedge\Rc_W+\Rc_\theta\wedge\nabla W$ as stated in corollary~\ref{Bianchi Corr}).
Hence
\begin{align*}
0&=d(\Rc_{\theta,W}-\Psi_{\theta,W})(X,Y,Z)\\
 &=\theta(\cycsum{XYZ}\nabla_X\Rc(Y,Z)W+\cycsum{XYZ}\Rc(T(X,Y),Z)W).
\end{align*}
Since $\theta, W$ are arbitrary
$$\cycsum{XYZ}\nabla_X\Rc(Y,Z)=\cycsum{XYZ}\Rc(X,T(Y,Z))$$
as required.
\end{proof}
\subsection*{Cartan versions}
Now we show that the equivalence of the Cartan versions of the Bianchi identities with ours.

\begin{propn}\label{with-cartan-version}
The first Bianchi identity \eqref{Bianchi-form-1} is equivalent to the one given in \eqref{first-Bianchi-form}.
\end{propn}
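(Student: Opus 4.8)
The plan is to specialise our intrinsic first Bianchi identity \eqref{dT-Rtheta2}, namely $dT_\theta=\Rc_\theta+\nabla\theta\wedge T$, to the dual basis one-forms $\theta=\theta^a$ and then read off the Cartan identity \eqref{first-Bianchi-form} term by term. The starting observation is the dictionary between the intrinsic and Cartan objects: taking $\theta=\theta^a$ and $Z=U_b$ in the definitions gives $T_{\theta^a}=\Theta^a$, $\Rc_{\theta^a,U_b}=\Omega^a_b$ and $\omega_{\theta^a,U_b}=\omega^a_b$. In particular the left-hand side is immediately $dT_{\theta^a}=d\Theta^a$, so the task reduces to identifying $\Rc_{\theta^a}+\nabla\theta^a\wedge T$ with $\Omega^a_b\wedge\theta^b-\omega^a_b\wedge\Theta^b$.

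First I would expand the curvature three-form. Writing $Z=\theta^b(Z)U_b$ and using the function-linearity of $\Rc_{\theta,Z}$ in $Z$ gives $\Rc_{\theta^a,Z}(X,Y)=\theta^b(Z)\,\Omega^a_b(X,Y)$, whence $\Rc_{\theta^a}(X,Y,Z)=\cycsum{XYZ}\Omega^a_b(X,Y)\theta^b(Z)$. Since for a 2-form $\alpha$ and a 1-form $\beta$ one has $(\alpha\wedge\beta)(X,Y,Z)=\cycsum{XYZ}\alpha(X,Y)\beta(Z)$, this is precisely $\Omega^a_b\wedge\theta^b$, matching the right-hand side of \eqref{first-Bianchi-form}. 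Next I would treat $\nabla\theta^a\wedge T$, which by Proposition~\ref{Bianchi Prop1} acts as $\cycsum{XYZ}\nabla_X\theta^a(T(Y,Z))$. The key computation here is the identity $\nabla_X\theta^a(U_b)=-\omega^a_b(X)$, obtained by applying $\nabla_X$ to $\theta^a(U_b)=\delta^a_b$ via the product rule. Writing $T(Y,Z)=\Theta^b(Y,Z)U_b$ then yields $(\nabla\theta^a\wedge T)(X,Y,Z)=-\cycsum{XYZ}\omega^a_b(X)\Theta^b(Y,Z)=-(\omega^a_b\wedge\Theta^b)(X,Y,Z)$, using the companion convention $(\alpha\wedge\beta)(X,Y,Z)=\cycsum{XYZ}\alpha(X)\beta(Y,Z)$ for a 1-form $\alpha$ and 2-form $\beta$.

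Assembling the three pieces converts $dT_{\theta^a}=\Rc_{\theta^a}+\nabla\theta^a\wedge T$ into $d\Theta^a=\Omega^a_b\wedge\theta^b-\omega^a_b\wedge\Theta^b$, i.e.\ exactly \eqref{first-Bianchi-form}. For the converse implication I would note that $\{\theta^a\}$ is a local basis and that both $T_\theta$ and $\nabla\theta$ are function-linear in $\theta$, so any $\theta=f_a\theta^a$ recovers the general identity \eqref{Bianchi-form-1} as a linear combination of the basis identities. I expect the only real obstacle to be bookkeeping: keeping the two wedge conventions straight and correctly tracking the sign in $\nabla_X\theta^a(U_b)=-\omega^a_b(X)$, which is what moves the connection term to the left-hand side and produces the $+\,\omega^a_b\wedge\Theta^b$ of the Cartan identity from the $+\,\nabla\theta^a\wedge T$ of ours. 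No genuinely new idea beyond this dictionary and these sign conventions should be required.
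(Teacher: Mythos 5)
Your forward direction is correct and is essentially the paper's own argument: the dictionary $T_{\theta^a}=\Theta^a$, $\Rc_{\theta^a,Z}(X,Y)=\theta^b(Z)\,\Omega^a_b(X,Y)$, and the sign identity $\nabla_X\theta^a(U_b)=-\omega^a_b(X)$ (the paper obtains this by applying the product rule directly to $\nabla_X\theta^a(T(Y,Z))$ rather than isolating it on $U_b$, but it is the same computation) assemble, under the paper's unnormalised wedge conventions, into $d\Theta^a+\omega^a_b\wedge\Theta^b=\Omega^a_b\wedge\theta^b$.

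The reverse direction, however, has a genuine gap. You justify passing from the basis identities to the identity for a general $\theta=f_a\theta^a$ by claiming that ``both $T_\theta$ and $\nabla\theta$ are function-linear in $\theta$, so any $\theta=f_a\theta^a$ recovers the general identity as a linear combination of the basis identities.'' The claim about $\nabla\theta$ is false: $\nabla$ is a derivation, so $\nabla(f_a\theta^a)=df_a\otimes\theta^a+f_a\nabla\theta^a$; and although $T_\theta$ \emph{is} function-linear, $dT_\theta$ is not, since $dT_{f_a\theta^a}=d(f_aT_{\theta^a})=df_a\wedge\Theta^a+f_a\,dT_{\theta^a}$. Consequently identity \eqref{Bianchi-form-1} for $\theta$ is \emph{not} a pointwise linear combination of the basis identities; extra terms involving $df_a$ appear on both sides. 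What rescues the strategy is that these defects cancel: the $df_a\wedge\Theta^a$ produced by $d$ is exactly matched by the $df_a\wedge\Theta^a$ hidden in $\nabla\theta\wedge T$, because $\nabla_X\theta(T(Y,Z))=X(f_a)\Theta^a(Y,Z)+f_a\nabla_X\theta^a(T(Y,Z))$, so that the full combination $dT_\theta-\Rc_\theta-\nabla\theta\wedge T$ is tensorial in $\theta$ even though its individual pieces are not. Establishing this cancellation is the actual content of the paper's reverse direction (it writes $\phi_a\,d\Theta^a=dT_\phi-d\phi_a\wedge\Theta^a$ and then shows $\phi_a\,\omega^a_b\wedge\Theta^b=d\phi_b\wedge\Theta^b-\nabla\phi\wedge T$, the two $d\phi$ terms cancelling), and it is precisely the step your argument asserts away with an incorrect premise.
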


\begin{proof}
\noindent Forward direction.
\smallskip

We will use the dual bases $\{\theta^a\}$ and $\{U_b\}$ of section \ref{Intro} along with the various constructs that appear in \eqref{first-Bianchi-form}. Replacing $\theta$ in equation \eqref{Bianchi-form-1} by $\theta^a$ and then acting a triple of vector fields $(X,Y,Z)$ on it, we have:

\begin{equation}
dT_{\theta^a}(X,Y,Z)=\cycsum{XYZ}\Rc_{\theta^a,Z}(X,Y)+\cycsum{XYZ}\nabla_X\theta^a(T(Y,Z))),\label{new-old-Bianchi1-1}
\end{equation}
but
\begin{align}
\ dT_{\theta^a}(X,Y,Z)&=d\Theta^a(X,Y,Z), \text{as $T_{\theta^a}=\Theta^a$}. \label{new-old-Bianchi1-2}\\
\notag \text{Now}\ \cycsum{XYZ}\Rc_{\theta^a,Z}(X,Y)&+\cycsum{XYZ}\nabla_X\theta^a(T(Y,Z))\\
\notag    &=\cycsum{XYZ}\theta^b(Z)\Omega^a_b(X,Y)\\
\notag    &+\cycsum{XYZ}(X(\theta^a(T(Y,Z)))-\theta^a(X(\theta^b(T(Y,Z))U_b)+\theta^b(T(Y,Z))\nabla_XU_b))\\
\notag    &=\Omega^a_b\wedge\theta^b(X,Y,Z)-\cycsum{XYZ}\Theta^b(Y,Z)\theta^a(\nabla_XU_b)\\
          &=\Omega^a_b\wedge\theta^b(X,Y,Z)-\omega^a_b\wedge\Theta^b(X,Y,Z).\label{new-old-Bianchi1-3}
\end{align}

Combining \eqref{new-old-Bianchi1-1}, \eqref{new-old-Bianchi1-2} and \eqref{new-old-Bianchi1-3} gives
$$d\Theta^a+\omega^a_b\wedge\Theta^b=\Omega^a_b\wedge\theta^b.$$.

We remark that we did not use either \eqref{first-structure-equation},\eqref{second-structure-equation} or \eqref{first-struc-eq-form},\eqref{second-struc-eq-form} in this part of the proof.

\smallskip
\noindent Reverse direction.
\smallskip

Let $\phi=\phi_a\theta^a$ be an arbitrary 1-form, then \eqref{first-Bianchi-form} gives
\begin{align}
&\phi_ad\Theta^a+\phi_a\omega^a_b\wedge\Theta^b=\phi_a\Omega^a_b\wedge\theta^b\notag \\
\implies\ & d(\phi_a\Theta^a)-d\phi_a\wedge\Theta^a+\phi_a\omega^a_b\wedge\Theta^b=\phi_a\Omega^a_b\wedge\theta^b\label{tag1}
\end{align}
The first term on the left is $dT_\phi$. Evaluating \eqref{tag1} on an arbitrary triple $(X,Y,Z)$ we see that
\begin{align*}
d\phi_a\wedge\Theta^a(X,Y,Z)&=\ \cycsum{XYZ}X(\phi_a)T_{\theta^a}(Y,Z),\\
\phi_a\omega^a_b\wedge\Theta^b&=\ \cycsum{XYZ}\phi_a\theta^a(\nabla_XU_b)T_{\theta^b}(Y,Z)\\
&=\ \cycsum{XYZ}\left(X(\phi_b)T_{\theta^b}(Y,Z)-\nabla_X\phi(T(Y,Z))\right)\\
\phi_a\Omega^a_b\wedge\theta^b(X,Y,Z)&=\ \cycsum{XYZ}\phi_a\theta^a(\Rc(X,Y)U_b)\theta^b(Z)\ =\ \cycsum{XYZ}\phi(\Rc(X,Y)Z)
\end{align*}
and so \eqref{tag1} becomes
\begin{align*}
& dT_\phi(X,Y,Z)-\cycsum{XYZ}\nabla_X\phi(T(Y,Z))=\cycsum{XYZ}\phi(\Rc(X,Y)Z)\\
\iff \ &dT_\phi=\Rc_\phi+\nabla\phi\wedge T
\end{align*}
as required.

\end{proof}

\begin{propn}
The second Bianchi identity \eqref{Bianchi-form-2} is equivalent to the one given in \eqref{second-Bianchi-form}.
\end{propn}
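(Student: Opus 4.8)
The plan is to mirror the two-part structure of Proposition~\ref{with-cartan-version}: the forward implication specialises our intrinsic identity \eqref{Bianchi-form-2} to the dual bases $\{\theta^a\}$, $\{U_b\}$, and the reverse implication expands an arbitrary $\phi$ and $W$ in these bases. Throughout I would lean on the two elementary consequences of the definition of the connection forms, namely $\nabla_X\theta^a=-\omega^a_c(X)\theta^c$ and $\nabla_XU_b=\omega^c_b(X)U_c$, together with the identification $\Rc_{\theta^a,U_b}=\Omega^a_b$ coming straight from the definitions.

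For the forward direction I would set $\theta=\theta^a$ and $Z=U_b$ in \eqref{Bianchi-form-2}. The left-hand side becomes $d\Rc_{\theta^a,U_b}=d\Omega^a_b$. On the right-hand side, evaluating on a triple $(X,Y,Z)$ and inserting $\nabla_X\theta^a=-\omega^a_c(X)\theta^c$ converts the first summand,
\[
\nabla\theta^a\wedge\Rc_{U_b}(X,Y,Z)=-\cycsum{XYZ}\omega^a_c(X)\Omega^c_b(Y,Z)=-\omega^a_c\wedge\Omega^c_b(X,Y,Z),
\]
while inserting $\nabla_XU_b=\omega^c_b(X)U_c$ converts the second summand into $\Rc_{\theta^a}\wedge\nabla U_b=\Omega^a_c\wedge\omega^c_b$. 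Collecting these gives exactly $d\Omega^a_b+\omega^a_c\wedge\Omega^c_b=\Omega^a_c\wedge\omega^c_b$, which is \eqref{second-Bianchi-form}.

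For the reverse direction I would take an arbitrary $\phi=\phi_a\theta^a$ and $W=W^bU_b$ and observe that $\Rc_{\phi,W}=\phi_aW^b\Omega^a_b$. Applying the Leibniz rule, $d\Rc_{\phi,W}=W^b\,d\phi_a\wedge\Omega^a_b+\phi_a\,dW^b\wedge\Omega^a_b+\phi_aW^b\,d\Omega^a_b$, and substituting \eqref{second-Bianchi-form} in the form $d\Omega^a_b=\Omega^a_c\wedge\omega^c_b-\omega^a_c\wedge\Omega^c_b$, the goal is to reassemble the result into $\nabla\phi\wedge\Rc_W+\Rc_\phi\wedge\nabla W$. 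The main obstacle is precisely this reassembly, which I expect to be notationally the heaviest part: one expands $\nabla_X\phi=(X(\phi_c)-\phi_a\omega^a_c(X))\theta^c$ and $\nabla_XW=(X(W^c)+W^b\omega^c_b(X))U_c$ and checks that the derivative-of-component term $W^b\,d\phi_a\wedge\Omega^a_b$ together with $-\phi_aW^b\,\omega^a_c\wedge\Omega^c_b$ rebuilds $\nabla\phi\wedge\Rc_W$, while $\phi_a\,dW^b\wedge\Omega^a_b$ together with $\phi_aW^b\,\Omega^a_c\wedge\omega^c_b$ rebuilds $\Rc_\phi\wedge\nabla W$. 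Since $\phi$ and $W$ are arbitrary this yields \eqref{Bianchi-form-2}; the only real care needed, exactly as in Proposition~\ref{with-cartan-version}, is in tracking the cyclic sums and the wedge sign conventions for the $1$-form/$2$-form pairings.
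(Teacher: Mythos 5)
Your proposal is correct, and it follows the paper's overall two-part scheme (forward: specialise to the dual bases; reverse: expand arbitrary $\phi$ and $W$ in those bases), but your forward direction runs along a noticeably different and shorter path. The paper keeps the vector argument $W$ arbitrary and works from the equality $d\Rc_{\theta^a,W}=d\Psi_{\theta^a,W}$: it expands both sides from the definitions via the product rule, and \eqref{second-Bianchi-form} emerges after the derivative terms $X(W^b)\Omega^a_b(Y,Z)$ cancel and the arbitrary components $W^b$ are stripped off; this buys self-containedness, and lets the authors remark that no structure equations are used in that half. You instead specialise both slots, $\theta=\theta^a$ and $Z=U_b$, and read \eqref{second-Bianchi-form} straight off the explicit right-hand side $\nabla\theta\wedge\Rc_Z+\Rc_\theta\wedge\nabla Z$ using $\nabla_X\theta^a=-\omega^a_c(X)\theta^c$ and $\nabla_XU_b=\omega^c_b(X)U_c$; this eliminates all bookkeeping with $W^b$ and its derivatives, at the price of leaning on the outermost equality in \eqref{Bianchi-form-2}, i.e.\ on $d\Psi_{\theta,Z}=\nabla\theta\wedge\Rc_Z+\Rc_\theta\wedge\nabla Z$, which the paper establishes only inside the proof of Proposition~\ref{Bianchi Propn2} --- legitimate here, since that proposition precedes this one. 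Your reverse direction, which you carry out in full (the groupings $W^b\,d\phi_a\wedge\Omega^a_b-\phi_aW^b\,\omega^a_c\wedge\Omega^c_b=\nabla\phi\wedge\Rc_W$ and $\phi_a\,dW^b\wedge\Omega^a_b+\phi_aW^b\,\Omega^a_c\wedge\omega^c_b=\Rc_\phi\wedge\nabla W$ are exactly right), is precisely the argument the paper leaves implicit by deferring to the second part of Proposition~\ref{with-cartan-version}; note only that, like the paper, you recover the outer equality of \eqref{Bianchi-form-2}, the middle term $d\Psi_{\phi,W}$ then being supplied by the identity from Proposition~\ref{Bianchi Propn2}.
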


\begin{proof}
Forward direction.
\smallskip

Again replacing $\theta$ in equation \eqref{Bianchi-form-2} by $\theta^a$ and then acting a triple of vector fields $(X,Y,Z)$ on it, we have

\begin{equation}
d\Rc_{\theta^a,W}(X,Y,Z)=d\Psi_{\theta^a,W}(X,Y,Z),\label{new-old-Bianchi2-1}
\end{equation}

\begin{align}
\notag d\Rc_{\theta^a,W}(X,Y,Z)&= \cycsum{XYZ}\nabla_X(\Rc_{\theta^a,W}(Y,Z))- \cycsum{XYZ}\Rc_{\theta^a,W}([X,Y],Z)\\
\notag                         &= \cycsum{XYZ}\nabla_X(W^b\Omega^a_b(Y,Z))- \cycsum{XYZ}W^b\Omega^a_b([X,Y],Z)\\
\notag                         &=W^b\cycsum{XYZ}(\nabla_X(\Omega^a_b(Y,Z))- \Omega^a_b([X,Y],Z))+\cycsum{XYZ}X(W^b)\Omega^a_b(Y,Z)\\
                               &=W^bd\Omega^a_b(X,Y,Z)+\cycsum{XYZ}X(W^b)\Omega^a_b(Y,Z),\label{new-old-Bianchi2-2}
\end{align}
\begin{align}
\notag d\Psi_{\theta^a,W}(X,Y,Z)&=\cycsum{XYZ}\nabla_X\theta^a(\Rc(Y,Z)W)+\cycsum{XYZ}\theta^a(\Rc(Y,Z)\nabla_XW)\\
\notag                          &=\cycsum{XYZ}(X(W^b\Omega^a_b(Y,Z))-X(W^b)\theta^a(\Rc(Y,Z)U_b)-W^b\theta^a(\nabla_X(\Rc(Y,Z)U_b)))\\
\notag                          &+\cycsum{XYZ}(X(W^b)\theta^a(\Rc(Y,Z)U_b)+W^b\theta^a(\Rc(Y,Z)\theta^c(\nabla_XU_b)U_c))\\
                             &=\cycsum{XYZ}X(W^b)\Omega^a_b(Y,Z))-W^b\omega^a_c\wedge\Omega^c_b(X,Y,Z)+ W^b\omega^c_b\wedge\Omega^a_c(X,Y,Z). \label{new-old-Bianchi2-3}
\end{align}

Combining \eqref{new-old-Bianchi2-1}, \eqref{new-old-Bianchi2-2} and \eqref{new-old-Bianchi2-3} we have
$$d\Omega^a_b+\omega^a_c\wedge\Omega^c_b=\Omega^a_c\wedge\omega^c_b.$$
We remark that we did not use either \eqref{first-structure-equation}, \eqref{second-structure-equation} or \eqref{first-struc-eq-form}, \eqref{second-struc-eq-form} in this part of the proof.

\smallskip
The proof of reverse direction follows along the same lines as the second part of the proof of Proposition \ref{with-cartan-version}.
\smallskip

\end{proof}

\subsection*{$\mathbf{d^\nabla}$ versions}

Before we establish the equivalence of the exterior covariant derivative form of the Bianchi identities with our formulation we will examine the $d^\nabla$ form of the structure equations \eqref{Delanoe-SI}, \eqref{Delanoe-SII}. It is straight forward to establish \eqref{Delanoe-SI} using the definition, \eqref{ext_cov_deriv}, of $d^\nabla$. The second structure equation follows from observing that
\begin{align*}
d^\nabla(\nabla Z)(X,Y) &=\nabla_X(\nabla_YZ)-\nabla_Y(\nabla_XZ)-\nabla_{[X,Y]}Z\\
&=\Rc(X,Y)Z
\end{align*}
so that
$$d^\nabla(\nabla Z)=\Rc_Z \quad \iff \quad d^\nabla\circ\nabla=\Rc$$
where $\Rc(Z):=\Rc_Z$ as defined in section 1.\newline
The $d^\nabla$ Bianchi identities follow by taking $d^\nabla$ of the $d^\nabla$ structure equations.

Delanoe \cite{Delanoe} attempted to show that the Bianchi identities followed as a direct consequence of $d^2=0$ and to do so used the exterior covariant derivative $d^\nabla$ formulation \eqref{Delanoe-BI} and \eqref{Delanoe-BII}.
However, he produced these formulas for Bianchi identities from the vector field version instead of directly from structure equations, only indirectly using the exterior derivative. We will now demonstrate their direct derivation from $d^2=0$ before discussing applications of them in the next section.

\begin{propn}
For $\theta\in\bigwedge^1(M)$ and $X,Y,Z,W \in\mathfrak{X}(M)$ the following formulae for $d^\nabla$ hold
\begin{align*}
&\ \theta(d^\nabla T(X,Y,Z))=\Rc_\theta(X,Y,Z)+d(T_\theta-\Xi_\theta)(X,Y,Z)\\
&\ d^\nabla\Rc(X,Y,Z)(\theta,W)=d(\Rc_{\theta,W}-\Psi_{\theta,W})(X,Y,Z)
\end{align*}
and hence \eqref{Delanoe-BI},\eqref{Delanoe-BII} are equivalent to \eqref{Bianchi-form-1} and \eqref{Bianchi-form-2} and so follow by taking the exterior derivatives of the structure equations \eqref{first-struc-eq-form} and \eqref{second-struc-eq-form}.
\end{propn}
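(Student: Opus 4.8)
The plan is to prove the two $d^\nabla$ identities directly by unwinding the definition \eqref{ext_cov_deriv} of the exterior covariant derivative on a $k=2$ tensor-valued form and then recognising the resulting expressions as the already-proved intrinsic Bianchi identities. First I would compute $\theta(d^\nabla T(X,Y,Z))$. Since $T$ is a vector-valued $2$-form, formula \eqref{ext_cov_deriv} with $k=3$ gives a sum of terms $\nabla_{X_i}(T(\cdots))$ together with bracket terms $T([X_i,X_j],\cdots)$. Contracting with the fixed $1$-form $\theta$ and pushing $\theta$ through using the product rule $\nabla_{X}(\theta(V))=\nabla_X\theta(V)+\theta(\nabla_X V)$, I would split each $\nabla_{X_i}(\theta(T(\cdots)))$ into a term $\nabla_{X_i}\theta(T(\cdots))$ and a term $\theta(\nabla_{X_i}T(\cdots))$. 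The cyclic sum of the $\nabla_{X_i}\theta(T(\cdots))$ terms is exactly $\nabla\theta\wedge T$, while the remaining terms reassemble, via Proposition~\ref{Bianchi Prop1} and the definitions of $T_\theta$ and $\Xi_\theta$, into $\Rc_\theta + d(T_\theta-\Xi_\theta)$. The key bookkeeping fact I would exploit is that $T_\theta=d\theta+\Xi_\theta$ (the first structure equation \eqref{first-struc-eq-form}), so $d(T_\theta-\Xi_\theta)=dT_\theta-d\Xi_\theta$ collapses cleanly.

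For the second formula I would treat $\Rc$ as a $(1,2)$-tensor-valued $2$-form in the sense of the excerpt: $\Rc(X,Y,Z)(\theta,W)$ means $\theta(\Rc(X,Y)W)$ contracted appropriately, i.e. I evaluate $d^\nabla\Rc$ on the triple $(X,Y,Z)$ and then pair the result against $(\theta,W)$. Applying \eqref{ext_cov_deriv} again with $k=3$, each term is of the form $\nabla_{X_i}\bigl(\Rc(\cdots)\bigr)$ or $\Rc([X_i,X_j],\cdots)$, and pairing with the fixed pair $(\theta,W)$ and using the product rule on $\nabla_{X_i}(\theta(\Rc(\cdots)W))$ produces, after cyclic summation, precisely the expression computed in the proof of Proposition~\ref{Bianchi Propn2} for $d\Psi_{\theta,W}(X,Y,Z)$ together with the curvature-derivative terms that make up $d\Rc_{\theta,W}(X,Y,Z)$. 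The identity $d^\nabla\Rc(X,Y,Z)(\theta,W)=d(\Rc_{\theta,W}-\Psi_{\theta,W})(X,Y,Z)$ then follows by matching the two sides termwise.

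Once these two formulae are in hand, the equivalence claims are immediate. For the first, $d^\nabla T=\Rc\wedge I$ contracted with $\theta$ reads $\theta(d^\nabla T(X,Y,Z))=\Rc_\theta(X,Y,Z)$ because $\theta(\Rc\wedge I(X,Y,Z))=\cycsum{XYZ}\theta(\Rc(X,Y)Z)=\Rc_\theta(X,Y,Z)$; comparing with the first displayed formula shows this is equivalent to $d(T_\theta-\Xi_\theta)=0$, i.e.\ to $dT_\theta=d\Xi_\theta$, which is Bianchi~I \eqref{Bianchi-form-1}. For the second, $d^\nabla\Rc=0$ contracted with $(\theta,W)$ says the right-hand side $d(\Rc_{\theta,W}-\Psi_{\theta,W})$ vanishes, which is Bianchi~II \eqref{Bianchi-form-2}. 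Since $\theta$ and $W$ are arbitrary, each direction is reversible.

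The main obstacle I expect is purely combinatorial: keeping the signs $(-1)^i$ and $(-1)^{i+j}$ in \eqref{ext_cov_deriv} correctly aligned with the signs $(-1)^{i+1}$ and $(-1)^{i+j}$ in the ordinary exterior-derivative formula used in the proofs of Propositions~\ref{Bianchi Prop1} and~\ref{Bianchi Propn2}, and verifying that the $\nabla_{X_i}X_j$ correction terms generated by replacing directional derivatives with covariant derivatives cancel in exactly the same pattern as in those earlier proofs. The conceptual content is already established; the work is to confirm that the $d^\nabla$ definition reproduces, after contraction and the product rule, the very same grouping of terms that Theorem~\eqref{first-struc-eq-p-form} and Theorem~\eqref{second-struc-eq-form-p-form} produced for the ordinary $d$. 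I would therefore organise the computation to mirror those proofs line by line rather than starting from scratch.
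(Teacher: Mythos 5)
Your proposal is correct and takes essentially the same route as the paper: unwind the definition \eqref{ext_cov_deriv} of $d^\nabla$, use the Leibniz rule to pass $\theta$ (and $W$) through the covariant derivatives so that $\theta(d^\nabla T)=dT_\theta-\nabla\theta\wedge T$ and similarly for $\Rc$, then reuse the expressions for $d\Xi_\theta$ and for $d(\Rc_{\theta,W}-\Psi_{\theta,W})$ already obtained in Propositions~\ref{Bianchi Prop1} and~\ref{Bianchi Propn2}, with the arbitrariness of $\theta$ and $W$ giving the stated equivalences. (Two trivial points: since $T$ and $\Rc$ are $2$-forms, \eqref{ext_cov_deriv} is applied with $k=2$, not $k=3$; and invoking the structure equation $T_\theta=d\theta+\Xi_\theta$ inside the derivation of the first formula is unnecessary --- $d(T_\theta-\Xi_\theta)=dT_\theta-d\Xi_\theta$ is just linearity of $d$ --- and is better kept out so that the formula stands as an identity independent of the structure equations, exactly as the equivalence argument requires.)
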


\begin{proof}
Using the definition \eqref{ext_cov_deriv} of $d^\nabla$ we have
\begin{equation*}
d^\nabla T(X,Y,Z):=\cycsum{XYZ}\nabla_X(T(Y,Z))-\cycsum{XYZ}T([X,Y],Z)
\end{equation*}
and so
\begin{align*}
dT_\theta(X,Y,Z):=&\cycsum{XYZ}X(T_\theta(Y,Z))-\cycsum{XYZ}T_\theta([X,Y],Z)\\
=&\cycsum{XYZ}\nabla_X\theta(T(Y,Z))+\cycsum{XYZ}\theta(\nabla_X(T(Y,Z))-\cycsum{XYZ}\theta(T([X,Y],Z))\\
=&\cycsum{XYZ}\nabla_X\theta(T(Y,Z)) +\theta(d^\nabla T(X,Y,Z)).
\end{align*}
The result for $d^\nabla T$ now follows immediately from the expression for $d\Xi_\theta$ in the proof of proposition \ref{Bianchi Prop1}.

The result for $d^\nabla \Rc$ follows initially from the observation that the last two terms in the following expression cancel because of the cyclic sum
\begin{align*}
d^\nabla\Rc(X,Y,Z)(\theta,W)=&\ \theta(d^\nabla\Rc(X,Y,Z)W)\\
=&\ \theta(\cycsum{XYZ}(\nabla_X\Rc)(Y,Z)W)+\theta(\cycsum{XYZ}\Rc(T(X,Y),Z)W)\\
&+\theta(\cycsum{XYZ}(\nabla_XY,Z)W)+\theta(\cycsum{XYZ}\Rc(\nabla_YX,Z)W).
\end{align*}
The rest of the demonstration follows from the expression for $d(\Rc_{\theta,W}-\Psi_{\theta,W})$ which can be found in the proof of proposition \ref{Bianchi Propn2}.
\end{proof}

\section{Applications}
We give three examples. We will demonstrate the utility of this new formulation of the Bianchi identities on two well-known scenarios involving non-integrable and integrable distributions. In both cases there is a distinguished one-form which will play the role of $\theta$. We also apply our $p$-form versions to the Cartan form of classical mechanics in the context of the inverse problem in the calculus of variations.
\subsection*{Contact manifolds}

 We follow \cite{CH85,T89,YK84}. A $(2n+1)$ -- dimensional contact manifold $M$ is equipped with a global, nonzero one-form $\alpha$ satisfying $\alpha\wedge(d\alpha)^n\neq 0$ where the exponent indicates the $n$-fold wedge product. In the light of this condition the contact form $\alpha$ is maximally non-integrable. Associated to $\alpha$ is the Reeb field, $V\in \mathfrak{X}(M)$, satisfying $V\hook d\alpha=0$ and $\alpha(V)=1.$ There is a standard construction of a Riemannian metric $g$ and a $(1,1)$ tensor field $\Phi$ on $M$ having the properties that
$$g(V,X)=\alpha(X),\quad 2g(X,\Phi(Y))=d\alpha(X,Y),\quad \Phi^2(X)=-X+\alpha(X)V.$$
Hence $g(V,V)=1$ and, if the Levi-Civita connection of $g$ is $\nabla$, then
$$\nabla_VV=0,\quad \nabla_V\alpha=0,\quad \nabla_V\Phi=0,$$
so that $V$ is a unit geodesic tangent field and $\alpha$ and $\Phi$ are parallel transported along the integral curves of $V$. There are examples of contact manifolds with linear connections with torsion  for which $V$ is autoparallel (e.g.,~\cite{MP94}) but we will stick with the torsion-free Levi-Civita connection of $g$ for simplicity.

Applying the first structure equation \eqref{first-struc-eq-form} to $\alpha$ we have
$$d\alpha=\nabla\alpha\wedge I$$
and so
$$0=V\hook d\alpha=\nabla_V\alpha-\nabla\alpha(V) \iff \nabla\alpha(V)=0=\alpha(\nabla V)=\omega_{\alpha,V}. $$
Applying the second structure equation \eqref{second-struc-eq-form} to $\alpha$ and $V$:
$$\Rc_{\alpha,V}=\nabla\alpha\wedge\nabla V=0,$$
after a little manipulation using the result of the first structure equation,
so that $\Rc(X,Y)V$ is orthogonal to $V$.

The Bianchi identities for a Levi-Civita connection are
$$\Rc_\theta=0\ \text{and}\ d\Rc_{\theta,Z}=\nabla\theta\wedge\Rc_Z$$
(here $\theta$ and $Z$ are arbitrary) of which only
$$V\hook d\Rc_{\alpha,V}=\nabla\alpha\wedge(V\hook\Rc_V)$$
is interesting in this context, remembering that $\Rc_V$ takes values orthogonal to $V$.

\subsection*{Frobenius integrable 1-forms}
Now we turn to the contrasting case of a manifold $M^n$ with a linear connection, not necessarily metric, and a global codimension one foliation.
That is, we suppose there exists a globally non-zero $\theta\in \bigwedge^1(M)$ which is Frobenius integrable, so that $d\theta\wedge\theta=0$. The Frobenius integrability of $\theta$ is equivalent to  the closure under the Lie bracket of  the $(n-1)-$~dimensional distribution $D \subset\mathfrak{X}(M)$ with $\theta(D)=0$. (We don't distinguish $D$ as a sub-bundle of $TM$ from the submodule of $\mathfrak{X}(M)$ which it generates.) Suppose also that $V$ is a non-zero vector field such that $\mathfrak{X}(M)=Sp\{V\}\bigoplus D$ and $\theta(V)=1$. For the moment we place no additional conditions on the relationship between $D$ and $\nabla$.\newline
We will now rephrase the Frobenius  condition in terms of $\nabla$ and $T_\theta$.
\begin{propn}\label{FITpropn}
Let $\theta$ be a global, non-zero one-form on $M$. Then
\begin{equation}\label{FITorsion}
d\theta\wedge\theta=0\ \iff \ T_\theta|_D=-\nabla\theta\wedge I.
\end{equation}
\end{propn}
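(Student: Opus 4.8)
The plan is to reduce the claimed identity to the vanishing of $d\theta$ on the distribution $D$ by means of the first structure equation, and then to recover the classical Frobenius equivalence by a direct computation exploiting the splitting $\mathfrak{X}(M)=Sp\{V\}\bigoplus D$.

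First I would invoke the first structure equation \eqref{first-struc-eq-form}, namely $T_\theta=d\theta-\nabla\theta\wedge I$, and restrict it to pairs of vector fields lying in $D$. Since the term $\nabla\theta\wedge I$ occurs on both sides of the proposed identity $T_\theta|_D=-\nabla\theta\wedge I$, it cancels, and the identity is seen to be equivalent to $d\theta(X,Y)=0$ for all $X,Y\in D$, that is, $d\theta|_D=0$. This step is purely formal and carries no difficulty beyond making the notion of restriction to $D$ precise.

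It then remains to prove the standard equivalence $d\theta\wedge\theta=0\iff d\theta|_D=0$. I would expand the $3$-form using
\[
(d\theta\wedge\theta)(X,Y,Z)=d\theta(X,Y)\theta(Z)-d\theta(X,Z)\theta(Y)+d\theta(Y,Z)\theta(X)
\]
and test it on triples drawn from the splitting $\mathfrak{X}(M)=Sp\{V\}\bigoplus D$, where $\theta(V)=1$ and $\theta|_D=0$. If all three arguments lie in $D$ every summand carries a factor $\theta(\cdot)=0$; if two or more arguments equal $V$ the $3$-form vanishes by antisymmetry; the only surviving test is $(d\theta\wedge\theta)(X,Y,V)=d\theta(X,Y)$ for $X,Y\in D$. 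Hence $d\theta\wedge\theta=0$ holds precisely when $d\theta|_D=0$, which together with the previous step yields both directions of \eqref{FITorsion}.

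The main obstacle is entirely bookkeeping: fixing the sign convention for the wedge product so that it is consistent with the definition of $\nabla\theta\wedge I$ appearing in \eqref{first-struc-eq-form}, and confirming that the $\nabla\theta\wedge I$ contributions genuinely cancel upon restriction to $D$. No analytic or structural difficulty is anticipated, the content being a repackaging of the Frobenius condition through the torsion.
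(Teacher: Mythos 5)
Your proof is correct, and its first step coincides with the paper's: both evaluate the first structure equation $T_\theta=d\theta-\nabla\theta\wedge I$ on pairs from $D$, so that the $\nabla\theta\wedge I$ terms cancel and the claimed identity reduces to a condition on $d\theta|_D$. The two arguments diverge after that. The paper computes, for $X,Y\in D$, $d\theta(X,Y)=-\theta([X,Y])$ (using $\theta(X)=\theta(Y)=0$), so the condition becomes $[X,Y]\in D$, and it then invokes the classical Frobenius equivalence, stated earlier in that section, between $d\theta\wedge\theta=0$ and closure of $D$ under the Lie bracket. You instead never mention brackets: you prove directly the pointwise linear-algebraic fact that $d\theta\wedge\theta=0$ if and only if $d\theta|_D=0$, by expanding $(d\theta\wedge\theta)(X,Y,Z)$ and testing against the splitting $\mathfrak{X}(M)=Sp\{V\}\bigoplus D$ with $\theta(V)=1$, $\theta|_D=0$. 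Your route is more self-contained (the equivalence you prove holds for any $2$-form in place of $d\theta$, and requires no integrability theory), whereas the paper's is shorter and makes the involutivity of $D$ explicit, which fits its narrative about torsion as an obstruction to integrability; the two are reconciled precisely by the identity $d\theta(X,Y)=-\theta([X,Y])$ on $D$. One small point of care in your version: the statement $T_\theta|_D=-\nabla\theta\wedge I$ should be read with the right-hand side also restricted to $D$, as you implicitly do when cancelling; with that understanding your argument is complete.
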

\begin{proof}
Evaluating the first structure equation on $(X,Y)$ with $X,Y \in D$
\begin{align*}
T_\theta(X,Y)&=d\theta(X,Y) -\nabla\theta\wedge I(X,Y)\\
&=-\theta([X,Y])-\nabla\theta\wedge I(X,Y),
\end{align*}
so $T_\theta|_D=-\nabla\theta\wedge I$ if and only if $[X,Y]\in D.$
\end{proof}

Next we introduce the notion of invariance of $\theta$, equivalently $D$, under $\nabla$. As usual, $D$ is said to be flat with respect to $\nabla$ if $\Rc|_D=0.$ However, the presence of torsion is generally an obstruction to the  construction  of  coordinates on the leaves of $D$ in which the components of the connection are zero. Instead of pursuing notions of flatness we follow the book by Bejancu and Farran~\cite{BF06} and consider connections {\em adapted to foliations}. For the moment suppose that $D$ is a distribution, not necessarily  integrable, of dimension $n-p$ and that $D'$ is a complementary distribution of dimension $p$ so that $\mathfrak{X}(M)=D\bigoplus D'$.

\begin{defn}

\begin{itemize}
\item[]
\item[(a)] A linear connection $\nabla$ on $M$ is said to be {\em adapted} to a distribution $D$ if
$$\nabla_XU\in D, \quad \forall X\in\mathfrak{X}(M),\ U\in D.$$

\item[(b)] A linear connection $\nabla$ is said to be an {\em adapted linear connection} if it is adapted to both $D$ and $D'$.
\end{itemize}
\end{defn}

We will not address the existence of an adapted linear connection (with torsion) for a pair $D,\ D'$, suffice it to say that the connection of Massa and Pagani~\cite{JP02,MP94} is such an example for $p=1.$

Bejancu and Farran~\cite{BF06} give the following proposition,
\begin{propn}
Let $\nabla$ be a linear connection and $D$ a distribution on a manifold $M$. Then $D$ is parallel with respect to $\nabla$ if and only if $\nabla$ is an adapted connection to $D$.
\end{propn}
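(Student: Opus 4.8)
The plan is to read \emph{parallel} in the holonomy sense---$D$ is parallel with respect to $\nabla$ if for every piecewise-smooth curve $\gamma$ the parallel transport $P_t$ along $\gamma$ carries $D_{\gamma(0)}$ onto $D_{\gamma(t)}$---and to show that this geometric condition is equivalent to the infinitesimal condition (a), namely $\nabla_XU\in D$ for all $X\in\mathfrak{X}(M)$ and $U\in D$. Both implications are local, so throughout I would work on a neighbourhood carrying a frame $\{E_1,\dots,E_{n-p},F_1,\dots,F_p\}$ adapted to the splitting $\mathfrak{X}(M)=D\oplus D'$, with $\{E_i\}$ spanning $D$ and $\{F_\alpha\}$ spanning $D'$.

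For the forward direction (parallel $\Rightarrow$ adapted) I would fix a point $q$, a tangent vector $X_q$ and a section $U\in D$, and choose a curve $\gamma$ with $\gamma(0)=q$ and $\dot\gamma(0)=X_q$. The defining limit of the covariant derivative,
\begin{equation*}
\nabla_{X_q}U=\frac{d}{dt}\Big|_{t=0}P_t^{-1}\,U_{\gamma(t)},
\end{equation*}
where $P_t\colon T_qM\to T_{\gamma(t)}M$, expresses $\nabla_{X_q}U$ as the derivative of the curve $t\mapsto P_t^{-1}U_{\gamma(t)}$ in the fixed vector space $T_qM$. Since $U_{\gamma(t)}\in D_{\gamma(t)}$ and, by hypothesis, $P_t^{-1}(D_{\gamma(t)})=D_q$, this curve lies entirely in the fixed subspace $D_q$; hence so does its derivative, giving $\nabla_{X_q}U\in D_q$. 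As $q$, $X_q$ and $U$ are arbitrary, $\nabla$ is adapted to $D$.

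For the reverse direction (adapted $\Rightarrow$ parallel) I would express adaptedness in the frame above: writing $\nabla_X E_i=\omega_i^{\,j}(X)E_j+\omega_i^{\,\alpha}(X)F_\alpha$, the condition $\nabla_X E_i\in D$ is exactly $\omega_i^{\,\alpha}\equiv0$. Along a curve $\gamma$, a parallel field $V=V^iE_i+V^\alpha F_\alpha$ satisfies $\dot V^a+\omega_b^{\,a}(\dot\gamma)V^b=0$; the vanishing of $\omega_i^{\,\alpha}$ decouples the $D'$-components into the homogeneous linear system $\dot V^\alpha+\omega_\beta^{\,\alpha}(\dot\gamma)V^\beta=0$. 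If $V(0)\in D_q$ then $V^\alpha(0)=0$, so uniqueness of solutions of this linear ODE forces $V^\alpha\equiv0$ and hence $V(t)\in D_{\gamma(t)}$ for all $t$; thus parallel transport preserves $D$.

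The argument is essentially routine, so the only points that need care are the differentiation-of-parallel-transport identity used in the first implication (including the direction of $P_t^{-1}$) and the clean appeal to existence and uniqueness for the decoupled linear system in the second; one should also note that the conclusion is independent of the auxiliary complement $D'$ and of the frame chosen. The one genuine subtlety is interpretational: if one instead takes \emph{parallel} to mean the infinitesimal condition $\nabla_XU\in D$ outright, the proposition collapses to a tautology, so the substance of the statement is precisely the equivalence of the holonomy notion with Definition~(a).
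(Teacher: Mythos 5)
The paper does not actually prove this proposition: it quotes it from Bejancu and Farran \cite{BF06}, and defines \emph{parallel} precisely in the holonomy sense you adopt, so your interpretational choice is the intended one. Your proof is correct and is the standard argument for this equivalence: the forward direction via the limit formula expressing $\nabla_{X_q}U$ as the derivative of the curve $t\mapsto P_t^{-1}U_{\gamma(t)}$ lying in the fixed subspace $D_q$, and the reverse direction via the vanishing of the mixed connection forms $\omega_i^{\,\alpha}$ in an adapted frame, which decouples the $D'$-components of the parallel-transport equation into a homogeneous linear system killed by uniqueness of solutions.
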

Here {\em parallel} means that $D_x$ is mapped to $D_y$ by parallel transport along any piecewise smooth path in $M$ between an arbitrary pair of points $x,y \in M$.

Now we return to our Frobenius integrable 1-form, $\theta$, its annihilator $D$ and complementary distribution $D':=Sp\{V\}.$ Suppose that $\nabla$ is a linear connection adapted to $D,D'$. By taking covariant derivatives of $\theta(D)=0$ and $\theta(V)=1$ and using the adapted connection property we find
\begin{align*}
\nabla_X\theta&=\lambda_X\theta,\quad \nabla_XV=-\lambda_XV, \quad \forall X\in D;\\
\nabla_V\theta&=\lambda_V\theta,\quad \nabla_VV=-\lambda_VV
\end{align*}
for  $\lambda_X:=\nabla_X\theta(V),\ \lambda_V:=\nabla_V\theta(V)$. (We could at least locally rescale $\theta$ and $V$ so that $\nabla_VV=0$ and $\theta(V)=1$ but this changes nothing.) Applying these relations to \eqref{FITorsion} we see the role of torsion in the integrability of $D$ once more:
\begin{propn}
In the presence of an adapted linear connection $\nabla$,
$$d\theta\wedge\theta=0\ \iff \ T_\theta|_D=0.$$
That is, $D$ is integrable if and only if, for all $X,Y \in D,$\ $T(X,Y)\in D.$
\end{propn}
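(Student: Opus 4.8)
The plan is to reduce everything to the already-proven Proposition~\ref{FITpropn} together with the adapted-connection relations derived immediately above the statement. Proposition~\ref{FITpropn} asserts that $d\theta\wedge\theta=0$ if and only if $T_\theta|_D=-\nabla\theta\wedge I$, so the entire task collapses to showing that the correction term $\nabla\theta\wedge I$ vanishes when restricted to $D$; once this is done the two conditions $T_\theta|_D=-\nabla\theta\wedge I$ and $T_\theta|_D=0$ coincide and the equivalence is inherited wholesale.

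To compute $\nabla\theta\wedge I$ on $D$, I would take an arbitrary pair $X,Y\in D$ and use the identity $\nabla\theta\wedge I(X,Y)=\nabla_X\theta(Y)-\nabla_Y\theta(X)$ recorded in the corollary to the first structure equation. The adapted-connection relation $\nabla_X\theta=\lambda_X\theta$ for $X\in D$ then turns each term into a multiple of $\theta$ evaluated on a vector of $D$, that is $\lambda_X\theta(Y)-\lambda_Y\theta(X)$, which is zero because $\theta(D)=0$ by definition of $D$. Hence $\nabla\theta\wedge I|_D=0$, and Proposition~\ref{FITpropn} gives $d\theta\wedge\theta=0\iff T_\theta|_D=0$ at once.

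Finally, to obtain the geometric reformulation in the last line of the statement, I would simply unravel the definition $T_\theta(X,Y)=\theta(T(X,Y))$: the condition $T_\theta|_D=0$ says $\theta(T(X,Y))=0$ for all $X,Y\in D$, and since $D$ is exactly the annihilator of $\theta$, this is equivalent to $T(X,Y)\in D$.

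I do not expect a genuine obstacle here; the mathematical content sits entirely in the adapted-connection relations, which are engineered precisely so that $-\nabla\theta\wedge I$ degenerates on $D$. The only point demanding care is bookkeeping: one must consistently keep the restriction to $D$ in view and not conflate $T_\theta$, defined on all of $\mathfrak{X}(M)$, with the restricted object $T_\theta|_D$ that actually appears in the equivalence.
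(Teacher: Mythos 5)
Your proposal is correct and is essentially the paper's own argument: the paper likewise obtains this proposition by applying the adapted-connection relations $\nabla_X\theta=\lambda_X\theta$ (for $X\in D$) to Proposition~\ref{FITpropn}, so that $\nabla\theta\wedge I(X,Y)=\lambda_X\theta(Y)-\lambda_Y\theta(X)=0$ on $D$ and the correction term drops out. The final unravelling of $T_\theta|_D=0$ as $T(X,Y)\in D$ via $D=\ker\theta$ is also exactly the intended reading.
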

Turning to the second structure equations~\eqref{second-struc-eq-form} we observe that $\omega_{\theta,X}=0$ for all $X\in D$ and hence
$$\Rc_{\theta,X}=0,\ \forall X\in D,$$
that is,\ $\Rc(W,Z)X \in D$ for all $X\in D$ and all $W,Z \in \mathfrak{X}(M).$ This is also immediately obvious from the vector field definition, \eqref{Curvature}, of $\Rc$ and is independent of the integrability of $\theta$.
Using the structure equations and the integrability of $\theta,$ the Bianchi identities~\eqref{Bianchi-form-1},\eqref{Bianchi-form-2} give
$$dT_\theta|_D=0 \quad \text{and}\quad d\Rc_{\theta,X}=0,\ \forall X\in D,$$
which are non-trivial only if $n\ge 4.$

\subsection*{The Cartan 1-form of classical mechanics}

The Euler-Lagrange equations of classical mechanics can be formulated in terms of the the Cartan 1-form, $\theta_L$, as follows (for more details see \cite{KP08} and references therein). Suppose that $M^n$ is the configuration space with co-ordinates $(x^a)$ and that $E:=\R\times TM$ with co-ordinates $(t,x^a,u^a)$ is the evolution space. The evolution space is naturally equipped with a vertical sub-bundle $V(E)$ spanned by the $V_a:=\pd{}{u^a}$ and a contact distribution spanned by the $\theta^a:=dx^a-u^adt$. The vertical endomorphism, $S=V_a\bigotimes \theta^a$ is an intrinsic vector-valued 1-form.

Systems of second order ordinary differential equations $\ddot x^a=f^a(t,x^b,\dot x^b)$ are represented by a semi-spray $\Gamma$ on $E$:
$$\Gamma:=\pd{}{t}+u^a\pd{}{x^a}+f^a\pd{}{u^a}.$$
The  Massa-Pagani connection induced by $\Gamma$ on $E$, $\nabla$, is uniquely defined by the properties $\nabla\Gamma=0$, $\nabla dt$, $\nabla S=0$ and that $V(E)$ is flat. The quantities $\Gamma^a_b:=-\onehalf\pd{f^a}{u^b}$ are important.

The interaction of the dynamics  with the geometric structure of $E$ is also manifested in the vector-valued 1-form $\lie{\Gamma}S$ which has eigenvalues $0,-1,1$ with corresponding eigenspaces spanned by $\Gamma, H_a,\text{and}\ V_a$ respectively where $H_a:=\pd{}{x^a}-\Gamma^b_a\pd{}{u^b}$. The dual eigenform basis is $\{dt,\theta^a,\psi^a\}$ with $\psi^a:=du^a-f^adt+\Gamma^a_b\theta^b$.

If a regular Lagrangian, $L(t,x^a,u^a)$, is specified then the Cartan 1-form on $E$ is
\begin{equation}
\theta_L:=Ldt+dL\circ S=Ldt+\pd{L}{u^a}(dx^a-u^adt)\label{Cartan 1-form}
\end{equation}
and the Euler-Lagrange equations can be expressed as
\begin{equation}
\Gamma_L \hook d\theta_L=0 \label{ELeqns}
\end{equation}
where
$$\Gamma_L:=\pd{}{t}+u^a\pd{}{x^a}+F^a\pd{}{u^a}$$
is the Euler-Lagrange field on $E$ and  $\ddot x^a=F^a$ are Newton's equations being the Euler-Lagrange equations in normal form. The necessary and sufficient conditions for a semispray to be an Euler-Lagrange field are known as the {\em Helmholtz conditions}, see \cite{KP08}.

Crampin, Prince and Thompson \cite{CPT84} showed that the Helmholtz conditions are equivalent to the existence of a closed, maximal rank $2$-form $\Omega:=g_{ab}\psi^a\wedge\theta^b$. Moreover, $g_{ab}=\displaystyle{\frac{\partial L^2}{\partial u^a\partial u^b}}$ and $\Omega = d\theta_L$ for each such multiplier $g_{ab}$.

Now using the Massa-Pagani connection the first structure equation \eqref{first-struc-eq-p-form}, we can replace the closure condition, $d\Omega=0$, by
$$T_\Omega=\Xi_\Omega.$$
Furthermore,  we can prove that
\begin{equation}d\Omega=0 \ \Leftrightarrow \ T_\Omega=\Xi_\Omega=0.\end{equation}
Hence the generalisation of first Bianchi identity is the sole necessary condition for the closure of $\Omega$, in this case, $dT_\Omega=\Rc_\Omega+\nabla \Omega \wedge T=0,$
where, for $\Theta \in \bigwedge^p(E),$
\begin{equation*}
\nabla\Theta\wedge T(X_1, \dots, X_{p+2}):=\cyclsum{X_iX_jX_k}{i<k<j}(-1)^{i+j+k+1}\nabla_{X_k}\Theta(T(X_i,X_j),X_1, \dots).
\end{equation*}

\noindent However, the version closest to the conventional Helmholtz conditions is
\begin{thm}
 If $\Omega \in \bigwedge^2(E)$ is of the form $$\Omega:=g_{ab}\psi^a\wedge\theta^b,$$
then $\Omega=d\theta_L$ for some regular Lagrangian $L$ if and only it has maximal rank and
\begin{align*}
& T_\Omega(\Gamma, V_a,V_b)=0,&&  T_\Omega(\Gamma, H_a,H_b)=0,\\
&\Xi_\Omega(\Gamma, V_a,H_b)=0, && \Xi_\Omega(V_a,V_b,H_c)=0.
\end{align*}
\end{thm}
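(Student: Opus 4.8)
The plan is to reduce the statement to the closure of $\Omega$ and then to read the four conditions off by evaluating $d\Omega$ on the adapted frame. By the result of Crampin, Prince and Thompson quoted above, an $\Omega$ of the prescribed form equals $d\theta_L$ for some regular $L$ precisely when it has maximal rank and is closed; so, granted maximal rank, everything reduces to showing that $d\Omega=0$ is equivalent to the four displayed equations. I would work from the $p$-form first structure equation in the form $d\Omega=T_\Omega-\Xi_\Omega$, optionally supported by the already-stated equivalence $d\Omega=0\iff T_\Omega=\Xi_\Omega=0$.

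First I would record the two structural facts that drive the whole computation: that $\Omega=g_{ab}\psi^a\wedge\theta^b$ pairs only the $V_a$ with the $H_b$, so its only non-vanishing frame components are $\Omega(V_a,H_b)=g_{ab}$ while $\Omega(\Gamma,\,\cdot\,)=\Omega(V_a,V_b)=\Omega(H_a,H_b)=0$; and that the Massa--Pagani connection, being fixed by $\nabla\Gamma=0$, $\nabla dt=0$, $\nabla S=0$ with $V(E)$ flat, preserves each of the distributions $\mathrm{Sp}\{\Gamma\}$, $\mathrm{Sp}\{H_a\}$, $\mathrm{Sp}\{V_a\}$ and has vanishing vertical covariant derivatives $\nabla_{V_a}V_b=\nabla_{V_a}H_b=0$. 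Since $\{\Gamma,H_a,V_a\}$ is a global frame, $d\Omega=0$ holds iff $d\Omega$ vanishes on every triple drawn from it, and there are seven essentially distinct types.

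Next I would evaluate $T_\Omega$ and $\Xi_\Omega$ type by type, using the bracket relations of the frame (in particular $[\Gamma,V_a]=-H_a+\Gamma^b_aV_b$, the value of $[\Gamma,H_a]$, and $[H_a,H_b]=R^c_{ab}V_c$). The pairing structure makes exactly one of $T_\Omega$, $\Xi_\Omega$ survive on four of the types. On $(\Gamma,V_a,V_b)$ and $(\Gamma,H_a,H_b)$ the term $\Xi_\Omega$ drops out and $d\Omega=0$ becomes $T_\Omega(\Gamma,V_a,V_b)=0$ and $T_\Omega(\Gamma,H_a,H_b)=0$; evaluating these I expect $T_\Omega(\Gamma,V_a,V_b)=g_{ab}-g_{ba}$ and $T_\Omega(\Gamma,H_a,H_b)=\Phi^c_ag_{cb}-\Phi^c_bg_{ca}$, i.e. the symmetry of the multiplier and the Jacobi-endomorphism condition. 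Dually, on $(\Gamma,V_a,H_b)$ and $(V_a,V_b,H_c)$ the torsion term $T_\Omega$ drops out and $d\Omega=0$ becomes $\Xi_\Omega(\Gamma,V_a,H_b)=0$ together with $\Xi_\Omega(V_a,V_b,H_c)=\partial g_{bc}/\partial u^a-\partial g_{ac}/\partial u^b=0$, the two derivative conditions. This accounts for all four stated equations.

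The \emph{main obstacle} is the three remaining triple types, $(V_a,V_b,V_c)$, $(H_a,H_b,H_c)$ and $(V_a,H_b,H_c)$, on which $d\Omega=0$ must be shown to be automatic once the four conditions hold. The first is immediate, but the $(H,H,H)$ component yields $T_\Omega(H_a,H_b,H_c)=-\cycsum{abc}R^d_{ab}g_{dc}$, and the $(V,H,H)$ component mixes the horizontal connection coefficients with $R^c_{ab}$; neither is visibly covered by the four equations. Resolving them is where the Bianchi machinery of this paper is needed: using the \sode{} identity expressing the nonlinear-connection curvature $R^c_{ab}$ through the $u$-derivatives of the Jacobi endomorphism $\Phi$, together with the symmetry of $g_{ab}$ and of $\partial g_{ab}/\partial u^c$ already secured, I would show these cyclic sums vanish identically. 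I expect this curvature bookkeeping to be the only genuinely delicate step, the rest being the routine frame expansion described above.
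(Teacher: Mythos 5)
Your strategy is sound, and the parts you actually compute are correct; note, though, that the paper itself prints no proof of this theorem, so the comparison must be with the argument its surrounding text clearly intends. Concretely, your frame analysis is right: with the Massa--Pagani connection one has $\nabla_{V_a}V_b=\nabla_{V_a}H_b=0$, the three distributions $Sp\{\Gamma\}$, $Sp\{H_a\}$, $Sp\{V_a\}$ are preserved, and the only nonzero torsion components are $T(\Gamma,V_a)=H_a$, $T(\Gamma,H_a)=-\Phi^b_aV_b$, $T(H_a,H_b)=-R^c_{ab}V_c$ (where $[H_a,H_b]=R^c_{ab}V_c$). Consequently $\Xi_\Omega$ vanishes identically on $(\Gamma,V_a,V_b)$ and $(\Gamma,H_a,H_b)$, while $T_\Omega$ vanishes identically on $(\Gamma,V_a,H_b)$ and $(V_a,V_b,H_c)$, and the four displayed quantities come out (up to overall signs) as $g_{ab}-g_{ba}$, $\Phi^c_ag_{cb}-\Phi^c_bg_{ca}$, $\Gamma(g_{ab})-g_{cb}\Gamma^c_a-g_{ac}\Gamma^c_b$ and $\partial g_{ac}/\partial u^b-\partial g_{bc}/\partial u^a$: exactly the four conventional Helmholtz conditions, as you predict.

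The gap is in your reverse direction. Because you invoke the Crampin--Prince--Thompson (CPT) result only in the form ``closed $+$ maximal rank $\iff$ $\Omega=d\theta_L$'', you must prove that the four conditions force $d\Omega=0$ on the remaining triples, and that is not ``curvature bookkeeping'': it is the sufficiency half of CPT's own theorem. Moreover the tools you name do not suffice for the $(V_a,H_b,H_c)$ component. They do dispose of $(H_a,H_b,H_c)$ --- the identity $R^d_{ab}=\frac{1}{3}\bigl(\partial\Phi^d_a/\partial u^b-\partial\Phi^d_b/\partial u^a\bigr)$, the $\Phi$-condition and the total symmetry of $\partial g_{ab}/\partial u^c$ kill $\cycsum{abc}R^d_{ab}g_{dc}$ --- but the $(V_a,H_b,H_c)$ component contains horizontal derivatives $H_b(g_{ac})$, which can only be reached by differentiating the condition $\Gamma(g_{ab})=g_{cb}\Gamma^c_a+g_{ac}\Gamma^c_b$ vertically, using $[V_a,\Gamma]=H_a-\Gamma^b_aV_b$ and the {\sc SODE} identity $\Phi^a_b=-\partial f^a/\partial x^b-\Gamma(\Gamma^a_b)-\Gamma^a_c\Gamma^c_b$; this is CPT lemma-level work, and it is not an application of the paper's Bianchi machinery, contrary to your last paragraph. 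All of it can be bypassed: since your own frame computation shows the four conditions are verbatim the classical Helmholtz conditions for $g_{ab}$, the CPT theorem exactly as quoted in the paper (Helmholtz conditions $\iff$ existence of the closed, maximal-rank $\Omega$ of this form, with $\Omega=d\theta_L$ for each such multiplier) delivers both implications at once; the triples $(V_a,V_b,V_c)$, $(V_a,H_b,H_c)$ and $(H_a,H_b,H_c)$ then never need to be examined, because closure is supplied by CPT rather than verified by hand. That, evidently, is the intended proof.
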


\section*{Acknowledgements}
Thoan Do gratefully acknowledges receipt of a Vietnamese government MOET-VIED scholarship and scholarship support from La Trobe University, and the hospitality of the Australian Mathematical Sciences Institute.
We thank Willy Sarlet, Tom Mestdag and Mike Crampin for their interest and helpful discussions. Geoff Prince thanks the Department of Mathematics at Ghent University for its hospitality. We thank a referee for helpful comments which clarified the presentation.


\begin{thebibliography}{99}

\bibitem{BF06}
A.\ Bejancu and H.R.\ Farran, {\em Foliations and Geometric Structures} (Springer) (2006).

\bibitem{CH85}
S.\ S. Chern and R.\ S. Hamilton, On Riemannian metrics adapted to three-dimensional contact manifolds in {\em Lecture Notes in Math.}, vol. 1111, 279--305 (Springer-Verlag) (1985).

\bibitem{cramp86}
M.\ Crampin and F.A.E.\ Pirani, {\em Applicable Differential Geometry} (Cambridge University Press) (1986).

\bibitem{CPT84}
M.\ Crampin, G.E.\ Prince and G.\ Thompson. A geometric version of
the Helmholtz conditions in time dependent Lagrangian dynamics,
 {\em  J.\ Phys.\ A:\ Math.\ Gen. \/} {\bf 17} (1984) 1437--1447.

\bibitem{Delanoe}
P.\ Delanoe, On Bianchi identities, {\em Rend. Circ. Mat. Palermo}, 51, Ser. 2, (2002) 237--248.

\bibitem{JP02}
M.\ Jerie and G.E.\ Prince, Jacobi fields and linear connections
for arbitrary second order ODE's, {\em J.\ Geom.\ Phys.\/} {\bf
43} (2002) 351--370.

\bibitem{KN63} S.\ Kobayashi and K.\ Nomizu, {\em Foundations of Differential Geometry, Volume I.}
Wiley-Interscience, New York (1963).

\bibitem{KP08}
O.\ Krupkov\'{a} and G.E.\ Prince, Second order ordinary differential equation in jet bundles and the inverse problem of the calculus of variation in: {\em Handbook of Global Analysis}, edited by D. Krupka and D. Saunders, Elsevier 2008.


\bibitem{KMS93}
I.\ Kol\'{a}\v{r}, P.W.\ Michor and J. Slov\'{a}k, {\em Natural Operations in Differential Geometry} (Springer-Verlag) (1993).

\bibitem{MP94}
E.\ Massa and E.\ Pagani, Jet bundle geometry, dynamical connections, and the inverse problem of Lagrangian mechanics, {\em Ann. Inst. H. Poincaré, Phys. Theorét.} {\bf 61} (1994) 17–-62.

\bibitem{R14}
P.\ Renteln, {\em Manifolds, Tensors, and Forms}
(Cambridge University Press) (2014).

\bibitem{SLK14}
J.\ Szilasi, R.\ Lovas and D.\ Kert\'{e}sz, {\em Connections, Sprays and Finsler Structures} (World Scientific) (2014).

\bibitem{Sp79}
M.\ Spivak, {\em A Comprehensive Introduction to Differential Geometry, Volume II, Second edition} (Publish or Perish) (1979).

\bibitem{T89}
S.\ Tanno, Variational Problems on Contact Riemannian Manifolds, {\em Trans. Amer. Math. Soc.} {\bf 314} (1989) 349--379.

\bibitem{YK84}
K.\ Yano and M.\ Kon, {\em Structures on Manifolds} (World Scientific) (1984).


\end{thebibliography}
\end{document}